\newtheorem{theorem}{Theorem}[section]
\newtheorem{proposition}[theorem]{Proposition}
\newtheorem{lemma}[theorem]{Lemma}
\newtheorem{question}[theorem]{Question}
\theoremstyle{plain}
\theoremstyle{remark}
\newtheorem{remark}[theorem]{Remark}
\newtheorem{example}[theorem]{Example}
\newcommand{\C}{{\mathbb C}}
\newcommand{\F}{{\mathbb F}}
\newcommand{\Q}{{\mathbb Q}}
\newcommand{\Z}{{\mathbb Z}}
\newcommand{\N}{{\mathbb N}}
\newcommand{\cX}{{\mathcal X}}
\newcommand{\cI}{{\mathcal I}}
\newcommand{\cJ}{{\mathcal J}}
\newcommand{\Gmn}{\mathbb{G}_{\text{m}}^{n}}
\DeclareMathOperator{\GL}{GL}
\DeclareMathOperator{\End}{End}
\DeclareMathOperator{\id}{id}
\newcommand{\bP}{{\mathbb P}}
\newcommand{\bG}{{\mathbb G}}
\newcommand{\bfa}{{\mathbf a}}
\newcommand{\bfp}{{\mathbf p}}
\newcommand{\bfu}{{\mathbf u}}
\newcommand{\bfv}{{\mathbf v}}
\newcommand{\bfw}{{\mathbf w}}
\newcommand{\bfx}{{\mathbf x}}
\newcommand{\bfy}{{\mathbf y}}
\newcommand{\bC}{{\mathbb C}}
\newcommand{\bA}{{\mathbb A}}
\newcommand{\lra}{\longrightarrow}
\newcommand{\cO}{\mathcal{O}}
\newcommand{\cB}{\mathcal{B}}
\newcommand{\cF}{\mathcal{F}}
\newcommand{\cU}{\mathcal{U}}
\newcommand{\Cp}{\bC_p}
\author{D.~Ghioca}
\address{
Dragos Ghioca\\
Department of Mathematics\\
University of British Columbia\\
Vancouver, BC V6T 1Z2\\
Canada
}
\email{dghioca@math.ubc.ca}
\author{K.~D.~Nguyen}
\address{
Khoa D.~Nguyen \\
Department of Mathematics\\
University of British Columbia\\
And Pacific Institute for The Mathematical Sciences\\ 
Vancouver, BC V6T 1Z2, Canada}
\email{dknguyen@math.ubc.ca}
\urladdr{www.math.ubc.ca/\~{}dknguyen}
\keywords{Dynamical Mordell-Lang problem, intersections of orbits, affine transformations, semiabelian varieties}
\subjclass[2010]{Primary: 37P55, 11C99. Secondary: 11B37, 11D61.}
\begin{document}
	\title[The orbit intersection problem]{The orbit intersection problem for linear spaces and semiabelian varieties}

	\begin{abstract}
	Let $f_1,f_2:\C^N\lra \C^N$ be affine maps $f_i(\bfx):=A_i\bfx + \bfy_i$ (where each $A_i$ is an $N$-by-$N$ matrix and $\bfy_i\in \C^N$), and let $\bfx_1,\bfx_2\in \bA^N(\C)$ such that $\bfx_i$ is not preperiodic under the action of $f_i$ for $i=1,2$. If none of the eigenvalues of the matrices $A_i$ is a root of unity, then we prove that the set $\{(n_1,n_2)\in \N_0^2\colon f_1^{n_1}(\bfx_1)=f_2^{n_2}(\bfx_2)\}$ is a finite union of sets of the form $\{(m_1k+\ell_1,m_2k+\ell_2)\colon k\in\N_0\}$ where $m_1,m_2,\ell_1,\ell_2\in\N_0$. Using this result, we prove that for any two self-maps $\Phi_i(x):=\Phi_{i,0}(x)+y_i$ on a semiabelian variety $X$ defined over $\C$ (where $\Phi_{i,0}\in \End(X)$ and $y_i\in X(\C)$), if none of the eigenvalues of the induced linear action $D\Phi_{i,0}$ on the tangent space at $0\in X$ is a root of unity (for $i=1,2$), then for any two non-preperiodic points $x_1,x_2$, the set $\{(n_1,n_2)\in \N_0^2\colon \Phi_1^{n_1}(x_1)=\Phi_2^{n_2}(x_2)\}$ is a finite union of sets of the form $\{(m_1k+\ell_1,m_2k+\ell_2)\colon k\in\N_0\}$ where $m_1,m_2,\ell_1,\ell_2\in\N_0$. We give examples to show that the above condition on eigenvalues is necessary and introduce certain geometric properties that imply such
	a condition. Our method involves an analysis of certain systems of polynomial-exponential equations and the $p$-adic exponential map for semiabelian varieties.  
	\end{abstract}

	\maketitle{}


 \section{Introduction}\label{sec:introduction}
Throughout this paper, let $\N$ denote the set of positive integers,
$\N_0:=\N\cup\{0\}$, and let $K$ be an algebraically closed field of characteristic $0$.
An arithmetic progression is a set of the form
$\{mk+\ell:\ k\in\N_0\}$  
for some $m,\ell\in\N_0$; note that when $m=0$, this set
is a singleton. 
For a map $f$ from a set $X$ to itself and for $m\in\N$, we let $f^m$
denote the $m$-fold iterate $f\circ\ldots\circ f$, and let $f^0$
denote the identity map on $X$. If $x\in X$, we define the orbit
$\cO_f(x):=\{f^n(x)\colon n\in\N_0\}$.  We say that $x$ is preperiodic (or more precisely $f$-preperiodic) if its orbit $\cO_f(x)$ is finite. 

In algebraic dynamics, one studies the system 
$\{\Phi^n:\ n\in\N_0\}$ when $X$ is a (quasi-projective) variety over
$K$ and $\Phi$ is a $K$-morphism. Motivated by the classical Mordell-Lang conjecture proved by Faltings \cite{Fal94} and Vojta \cite{Voj96}, the Dynamical Mordell-Lang Conjecture predicts that for a given
$x\in X(K)$ and a closed subvariety $V$ of $X$, the set $\{n\in\N\colon \Phi^n(x)\in V(K)\}$
is a finite union of arithmetic progressions (see \cite[Conjecture~1.7]{GT-JNT} along with the earlier work  of Denis \cite{Den94} and Bell \cite{Bel06}). Considering $X$ a semiabelian variety and $\Phi$ the translation by a point $x\in X(K)$, one recovers the cyclic case in the classical Mordell-Lang conjecture from the above stated Dynamical Mordell-Lang Conjecture. So, it is natural to seek a generalization of the Dynamical Mordell-Lang Conjecture (see \cite{GTZ-Bordeaux}) to a statement which would contain as a special case the full statement of the classical  Mordell-Lang conjecture. Therefore one can study the general \emph{dynamical Mordell-Lang problem} by considering commuting $K$-morphisms
$f_1,\ldots,f_r$ from $X$ to itself and 
asking whether the set 
$$S(X,f_1,\ldots,f_r,x,V):=\{(n_1,\ldots,n_r)\in\N_0^r\colon f_1^{n_1}\circ\cdots \circ f_r^{n_r}(x)\in V(K)\}$$ is a finite union of translates of subsemigroups of 
$\N_0^r$. This more general problem turns out to be rather
delicate even when the underlying variety $X$ is an algebraic group
and each $f_i$ is an endomorphism. Indeed, a recent theorem of
Scanlon-Yasufuku \cite{SY2013} establishes that for any system of polynomial-exponential equations, its set of solutions is equal to
a set of the form $S(X,f_1,\ldots,f_r,x,V)$
where $X$ is an algebraic torus, $V$ is an algebraic subgroup, and $f_1,\ldots,f_r$ are some commuting endomorphisms of $X$.

The Dynamical Mordell-Lang Conjecture has sparked
significant interest and there have been many partial results; we refer the readers to \cite{book} for a survey of recent work. On
the other hand, there are very few results known for the more general
 dynamical Mordell-Lang problem. In fact, not much is known even when we restrict to the following special case called \emph{the orbit intersection problem}:
\begin{question}
\label{main question}
Let $X$ be a variety over a $K$, let $r\geq 2$. For $1\leq i\leq r$, let $\Phi_i$ be a $K$-morphism from $X$ to itself, and let $\alpha_i\in X(K)$ that is not $\Phi_i$-preperiodic. 
When can we conclude that the set $S:=\{(n_1,\ldots,n_r)\in \N_0^r\colon \Phi_1^{n_1}(\alpha_1)=\ldots = \Phi_{r}^{n_r}(\alpha_r)\}$ is a finite union of sets of the form $\{(n_1k+\ell_1,\ldots,n_rk+\ell_r)\colon k\in\N_0\}$
for some $n_1,\ldots,n_r,\ell_1,\ldots,\ell_r\in\N_0$?
\end{question}  

\begin{remark}
For $1\leq i\leq r$, let $f_i$ be the self-map of $X^r$
induced by the map $\Phi_i$ on the $i$-th factor and the identity
map on all the other factors.
Let $\Delta$ be the diagonal of 
$X^r$. The set $S$ in Question~\ref{main question} is
exactly the set of $(n_1,\ldots,n_r)\in\N_0^r$ such that 
$(f_1^{n_1}\circ \ldots\circ f_r^{n_r}) (\alpha_1,\ldots,\alpha_r)\in \Delta$. This explains why Question~\ref{main question} is a special case of the general dynamical Mordell-Lang problem with the further requirement that $S$ is a finite union of translates of subsemigroups of $\N_0^r$ \emph{whose rank is at most $1$}. When some $\alpha_i$
is preperiodic, it is trivial to describe the set $S$. This justifies
our assumption on the $\alpha_i$'s.
\end{remark}

\begin{remark}\label{rem:no automorphism}
Motivated by the examples in \cite[Section~6]{GTZ-Bordeaux}, one may ask whether
the following condition is sufficient for Question~\ref{main question}: there do not exist $m\in \N$ and a positive dimensional
closed subvariety $Y$ of $X$ such that $\Phi_i^m$ 
restricts to an automorphism on $Y$ for some $i\in\{1,\ldots,r\}$. 
This condition is indeed sufficient when $X$ is a 
semi-abelian variety (see Theorem~\ref{main result}
and Proposition~\ref{prop:no automorphism}); also this condition is often necessary as shown by various examples such as the following one. If $X=\bA^2$, $m=2$, and  $\Phi_1:\bA^2\lra \bA^2$ is given by $\Phi_1(x,y)=(x+y^2,y^3)$ while $\Phi_2:\bA^2\lra \bA^2$ is given by $\Phi_2(x,y)=(x^2+y,y^2)$, then both $\Phi_1$ and $\Phi_2$ restrict to endomorphisms of $V:=\bA^1\times\{1\}$, and moreover $(\Phi_1)|_V$ is actually an automorphism. Then the set of all pairs $(n_1,n_2)$ such that $\Phi_1^{n_1}(0,1)=\Phi_2^{n_2}(0,1)$ is infinite, but it is not a finite union of cosets of subsemigroups of $\N_0\times \N_0$.
\end{remark}

Question~\ref{main question} in the case $X=\bP^1_K$ and 
each $\Phi_i$ is a polynomial of degree larger than $1$ has been
settled by Tucker, Zieve, and the first author \cite{Inventiones,GTZ-Duke}. They also obtain
various results for the general Mordell-Lang problem when
$X$ is a semiabelian variety and the self-maps
are endormophisms satisfying certain technical conditions
\cite{GTZ-Bordeaux}. The case when $X=\bP^1_K$ endowed by the action of 
certain generic rational functions is also established in an
ongoing joint work of Zieve and the second author.

\emph{The goal of this paper is to answer Question~\ref{main question} when $X$ is a semiabelian variety and when $X=\bA^n_K$
and the self-maps are affine transformations.}

\begin{theorem}
\label{main result}
Let $X$ be a semiabelian variety over $K$ and $r\geq 2$. For
$1\leq i\leq r$, let $\Phi_i:\ X\rightarrow X$ be a $K$-morphism and let $\alpha_i\in X(K)$ that is
not $\Phi_i$-preperiodic. Let $\Phi_{i,0}$ be 
a $K$-endomorphism of $X$ and $\alpha_{i,0}\in X(K)$
such that $\Phi_i(x)=\Phi_{i,0}(x)+\alpha_{i,0}$
and let $D\Phi_{i,0}$ be the linear transformation of the tangent space at
the identity of $X$ induced by $\Phi_{i,0}$. If
none of the eigenvalues of $D\Phi_{i,0}$ is a root of unity
for every $i$, then the set
$$S:=\{(n_1,\ldots,n_r)\in \N_0^r\colon \Phi_1^{n_1}(\alpha_1)=\ldots=\Phi_r^{n_r}(\alpha_r)\}$$
is a finite union of sets of the form
$\{(n_1k+\ell_1,\ldots,n_rk+\ell_r)\colon k\in\N_0\}$
for some $n_1,\ldots,n_r,\ell_1,\ldots,\ell_r\in\N_0$.
\end{theorem}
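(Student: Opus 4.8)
The plan is to reduce Theorem~\ref{main result} to the case $r=2$, and then, using the $p$-adic exponential map of $X$, to turn the orbit-intersection equation inside $X$ into an orbit-intersection equation for genuine affine self-maps of a $p$-adic vector space, to which the companion theorem for affine self-maps (stated in the abstract) can be applied.

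\emph{Reduction to $r=2$.} A tuple $(n_1,\dots,n_r)$ lies in $S$ if and only if $(n_1,n_i)\in S_i:=\{(a,b)\colon\Phi_1^{a}(\alpha_1)=\Phi_i^{b}(\alpha_i)\}$ for every $i\in\{2,\dots,r\}$, so $S$ is the intersection of the preimages of the $S_i$ under the coordinate projections $\N_0^r\to\N_0^2$. Granting the case $r=2$, each $S_i$ is a finite union of sets $\{(m_ik+\ell_i,m_i'k+\ell_i')\colon k\in\N_0\}$, and since $\alpha_i$ is not preperiodic the set $\{n\colon\Phi_i^{n}(\alpha_i)=\beta\}$ has at most one element for every $\beta$; this forces each constituent progression of each $S_i$ to be a single point or to have both $m_i\ge 1$ and $m_i'\ge 1$. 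Choosing one constituent from each $S_i$ and using the Chinese Remainder Theorem, one checks that the corresponding intersection is empty, a single point, or a progression $\{(M_1k+L_1,\dots,M_rk+L_r)\colon k\in\N_0\}$; the finite union of these over all choices is $S$. So we may assume $r=2$.

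\emph{Reduction to the affine case.} Since the data involve only finitely many elements of $K$, we may assume $K$ is finitely generated over $\Q$ and fix an embedding $K\hookrightarrow\Cp$ for a prime $p$ at which $X$ has good reduction and everything is integral. The eigenvalue hypothesis and the non-preperiodicity are preserved on replacing $\Phi_i$ by an iterate $\Phi_i^{M}$, and on such a replacement, together with replacing $\alpha_i$ by $\Phi_i^{L}(\alpha_i)$, $S$ is replaced by a finite union of shifted sublattices; carrying out these standard reductions of the $p$-adic Mordell-Lang method (and translating, or else $S=\emptyset$), we may assume the reduction of $\Phi_i$ fixes that of $\alpha_i$ and that the forward orbit of $\alpha_i$ lies in a residue disk $X(\Cp)_1$ on which the $p$-adic logarithm $\log_X\colon X(\Cp)_1\to\mathrm{Lie}(X)\otimes\Cp$ is an injective group homomorphism carrying every endomorphism of $X$ to its derivative. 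Put $B_i:=D\Phi_{i,0}$. Iterating $\Phi_i=\Phi_{i,0}+\alpha_{i,0}$ and applying $\log_X$ (legitimate, since all terms lie in $X(\Cp)_1$) gives $\log_X\Phi_i^{n}(\alpha_i)=B_i^{n}\log_X\alpha_i+\bigl(\sum_{j=0}^{n-1}B_i^{j}\bigr)\log_X\alpha_{i,0}=g_i^{n}(\log_X\alpha_i)$, where $g_i(\bfx):=B_i\bfx+\log_X\alpha_{i,0}$ is an affine self-map of $\mathrm{Lie}(X)\otimes\Cp$. Hence
$$S=\bigl\{(n_1,n_2)\in\N_0^2\colon g_1^{n_1}(\log_X\alpha_1)=g_2^{n_2}(\log_X\alpha_2)\bigr\}.$$
The linear part of $g_i$ is $B_i=D\Phi_{i,0}$, whose eigenvalues are not roots of unity, and by injectivity of $\log_X$ the point $\log_X\alpha_i$ is not $g_i$-preperiodic. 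Applying the companion theorem for affine maps therefore shows that $S$ is a finite union of sets $\{(m_1k+\ell_1,m_2k+\ell_2)\colon k\in\N_0\}$, completing the proof for $r=2$ and hence in general.

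\emph{Where the difficulty lies.} Making the $p$-adic reductions precise---placing the whole orbit inside the domain where $\log_X$ is a faithful homomorphism, and absorbing the finitely many nearby torsion translates this may introduce (finitely many because the torsion of $X$ over a finitely generated field is finite)---is the technical core of the reduction. But the substantive content is the affine statement itself: writing $g_i^{n_i}(\log_X\alpha_i)$ coordinatewise in a Jordan basis of $B_i$, one finds that $S$ is the common solution set of a finite system of polynomial-exponential equations $\sum_jP_{1j}(n_1)\lambda_{1j}^{n_1}=\sum_jP_{2j}(n_2)\lambda_{2j}^{n_2}+(\text{const})$ with the $\lambda_{ij}$ the (non-root-of-unity) eigenvalues of $B_i$, and one must show such a set is a finite union of rank-$\le1$ arithmetic progressions. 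The non-root-of-unity hypothesis is exactly what keeps the exponential terms from being periodic and rules out the multiplicative relations among the $\lambda_{ij}$ that would produce a genuinely two-parameter family of solutions, while the non-preperiodicity of $\log_X\alpha_i$ prevents one coordinate of $S$ from ranging over all of $\N_0$ with the other held fixed. I expect the main obstacle to be precisely this last analysis---excluding the "fat" (rank $2$) case and extracting an honest arithmetic progression from each infinite component---which is the role of the analysis of systems of polynomial-exponential equations advertised in the abstract, and which one would carry out by a Skolem-Mahler-Lech/Strassmann argument in each variable followed by a two-variable $p$-adic analytic argument on each resulting pair of progressions.
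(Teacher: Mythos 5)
Your overall architecture is the same as the paper's: reduce to $r=2$, then use the $p$-adic exponential/logarithm to convert the semiabelian orbit equation into an orbit-intersection equation for affine self-maps of $\Cp^N$, and invoke the affine theorem (the paper's Theorem~\ref{main result linear algebra}, which the paper likewise treats as a separately proved ingredient). Your reduction to $r=2$ via pairwise conditions and CRT is correct and slightly slicker than the paper's iterative substitution.

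The genuine gap is exactly the step you defer as ``the technical core'': getting the points into the domain where $\exp$/$\log$ is a faithful isomorphism. You cannot arrange that the forward orbit of $\alpha_i$ lies in the residue disk of the identity merely by passing to an iterate and shifting the base point, since the reduction of the orbit modulo $p$ need not contain the identity, and the translation part $\alpha_{i,0}$ is likewise not in that disk. The paper's fix (Lemma~\ref{suffices to multiply by m}) multiplies everything by the finite index $k$ of $\exp(\cU)\cap\cX(\Z_p)$ in $\cX(\Z_p)$; but multiplication by $k$ is not injective, so a progression description of $\{(m,n)\colon k\Phi^m(\alpha)=k\Psi^n(\beta)\}$ does not directly yield one for $S$. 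Your remark about ``absorbing the finitely many nearby torsion translates'' does not close this: knowing that $\Phi^m(\alpha)-\Psi^n(\beta)$ ranges over the finite group $X[k]$ along a progression does not show that the locus where it vanishes is again a finite union of progressions (there is no recursion for this difference along the progression unless $\Phi_0^{m_0}=\Psi_0^{n_0}$). The paper supplies the missing ingredient by applying the cyclic Dynamical Mordell--Lang theorem for \'{e}tale maps (\cite[Theorem~4.1]{Jason}) to the single self-map $(\Phi^{m_0},\Psi^{n_0})$ of $X\times X$ and the diagonal, one progression at a time. To complete your write-up you must either import that result, or replace multiplication by $k$ with conjugation by translations (work with $\tilde\Phi(x)=\Phi(x+\alpha)-\alpha$, whose translation part $\Phi(\alpha)-\alpha$ does lie in the kernel of reduction after passing to a suitable iterate and shifting the base point, and note that $S$ meets a given pair of residue disks only if they coincide, so that $\beta-\alpha$ is also in the kernel); as written, neither route is carried out.
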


We note that any self-map of a semiabelian variety is indeed a composition of a translation with an algebraic group endomorphism (see \cite[Theorem~5.1.37]{Noguchi}). The structure for self-maps on semiabelian varieties $X$ is similar to the structure of affine self-maps on $\bA^N$, and this allows us to reduce Theorem~\ref{main result} (using the $p$-adic exponential map on $X$) to proving Question~\ref{main question} for affine endomorphisms of $\bA^N$ (see Theorem~\ref{main result linear algebra}).

\begin{example}
We present an example to illustrate that the conclusion of Theorem~\ref{main result} would fail without the assumption on the eigenvalues of
the linear maps $D\Phi_{i,0}$'s. Consider the case $X=\bG_m$, $\Phi_1(x)=2x$, 
$\Phi_2(x)=x^2$, $\alpha_1=1$, and $\alpha_2=2$; then $\{(m,n)\in\N_0\times \N_0\colon \Phi_1^m(\alpha_1)=\Phi_2^n(\alpha_2)\}=\{(2^n,n)\colon n\in \N_0\}$.
\end{example}

In Section~\ref{section proof}, we present the proof of Theorem~\ref{main result} and give some geometric conditions
that imply the condition on the linear transformations $D\Phi_{i,0}$
in Theorem~\ref{main result}. For instance, let $\Phi_0$ be an endomorphism
of a semiabelian variety $X$ defined over $K$, then  
none of the eigenvalues of $D\Phi_0$ is a root of unity 
\emph{if and only if} 
$\Phi_0$
does not preserve a non-constant fibration
 (see Proposition~\ref{prop:no fibration}).
Here we say that $\Phi_0$ preserves a non-constant fibration if there exists a non-constant \emph{rational} map
$f:\ X\rightarrow \bP^1_K$ such that $f\circ \Phi_0 = f$.

In \cite[Theorem~1.3 (a)]{GTZ-Bordeaux},  a special case of Theorem~\ref{main result} was obtained, i.e. when each $\Phi_i=\Phi_{i,0}$ is a group endomorphism and moreover, the Jacobians at $0\in X$ of each $\Phi_i$ is \emph{diagonalizable}. The hypothesis from \cite{GTZ-Bordeaux} about the diagonalizability of the  Jacobians of $\Phi_i$  
greatly simplifies the problem since it
allows one to reduce the problem to classical unit equations in diophantine geometry. In the absence of the diagonalizability condition, we have to use a much more refined analysis of the pairs $(m,n)\in \N_0\times \N_0$ such that $a_m=b_n$ for two arbitrary linear recurrence sequences. The result from \cite{GTZ-Bordeaux} dealt only with the much easier case when the characteristic polynomials for these two linear recurrence sequences have \emph{non-repeated} roots. As it was noted in \cite[Section~6]{GTZ-Bordeaux}, if one of the maps $\Phi_i$  is an automorphism of $X$ (or induces an automorphism of a positive dimensional subvariety of $X$), then the set $S$ may no longer be a finite union of cosets of subsemigroups of $\N_0^r$. Essentially, the problem with one of the endomorphisms being actually an automorphism is the following: assuming $X$, $\alpha_i$ and $\Phi_i$ are defined over a number field, then the points in $\cO_{\Phi_i}(\alpha_i)$ are not sufficiently sparse with respect to a Weil height on $X$ and this increases the probability that  $\cO_{\Phi_i}(\alpha_i)$ intersects the other orbits.

The most important ingredient in the proof of Theorem~\ref{main result}
is the following result which also answers Question~\ref{main question}
when $X=\bA^n_K$ and the maps $\Phi_i$'s are affine transformations.

\begin{theorem}
\label{main result linear algebra}
Let $r,N\in\N$ with $r\geq 2$. For $i\in\{1,\ldots,r\}$, let 
$f_i:\ K^N\lra K^N$ be an affine map which means there exist 
an $N\times N$-matrix $A_i\in M_N(K)$ and a vector $\bfx_i\in K^N$ such that $f(\bfx)=A_i\bfx+\bfx_i$ for every $\bfx\in K^N$.
 For $i\in\{1,\ldots,r\}$, let $\bfp_i\in K^N$ that is not
 $f_i$-preperiodic.
 If none of the eigenvalues of $A_i$ is a root of unity for each $i\in\{1,\ldots,r\}$, then the set $$S:=\{(n_1,\ldots,n_r)\in \N_0^r\colon f_1^{n_1}(\bfp_1)=\ldots=f_r^{n_r}(\bfp_r)\}$$ is a finite union of sets of the form
 $\{(n_1k+\ell_1,\ldots,n_rk+\ell_r):\ k\in \N_0\}$ for
 some $n_1,\ldots,n_r,\ell_1,\ldots,\ell_r\in\N_0$.
\end{theorem}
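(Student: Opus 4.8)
The plan is to reduce to the case $r=2$ and then analyze the equation $f_1^{n_1}(\bfp_1)=f_2^{n_2}(\bfp_2)$ coordinate by coordinate after passing to Jordan normal form. For each $i$, write $A_i$ in Jordan form over $\bar K$; then each coordinate of $f_i^{n_i}(\bfp_i)$ is a function of $n_i$ of the form $\sum_j P_{i,j}(n_i)\lambda_{i,j}^{n_i}$, where the $\lambda_{i,j}$ are the eigenvalues of $A_i$ (note that the possible extra polynomial factor coming from an eigenvalue $1$, which would normally appear in an affine iteration, is excluded precisely because $1$ is a root of unity and hence not an eigenvalue of $A_i$). So each equation "coordinate of $f_1^{n_1}(\bfp_1)$ equals coordinate of $f_2^{n_2}(\bfp_2)$" becomes a polynomial-exponential equation $\sum_j P_{1,j}(n_1)\lambda_{1,j}^{n_1}=\sum_k Q_{k}(n_2)\mu_k^{n_2}$ in the two unknowns $(n_1,n_2)\in\N_0^2$.

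\medskip

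The heart of the argument is a structure theorem for the solution set of such a two-variable polynomial-exponential equation: I claim the set of $(m,n)\in\N_0^2$ with $a_m=b_n$, where $(a_m)$ and $(b_n)$ are linear recurrence sequences none of whose characteristic roots is a root of unity, is a finite union of (i) finite sets, (ii) "vertical/horizontal" pieces where one coordinate is fixed, and (iii) genuine arithmetic-progression pieces $\{(m_1k+\ell_1,m_2k+\ell_2):k\in\N_0\}$. To prove this I would invoke the theory of $S$-unit equations and the subspace theorem (or, more structurally, the Skolem--Mahler--Lech circle of ideas and the results on the number of solutions to unit equations), together with an application of $p$-adic interpolation: choosing a prime $p$ at which all the $\lambda_{i,j},\mu_k$ are $p$-adic units, one interpolates $n_1\mapsto f_1^{n_1}(\bfp_1)$ and $n_2\mapsto f_2^{n_2}(\bfp_2)$ to $p$-adic analytic functions on $\Z_p$, and the equation $f_1^{n_1}(\bfp_1)=f_2^{n_2}(\bfp_2)$ becomes an equation between $p$-adic analytic functions whose zero set on $\Z_p\times\Z_p$ is controlled by Weierstrass preparation and hence is a finite union of the graphs of finitely many $p$-adic analytic curves; the hypothesis on the eigenvalues forces these curves to be (essentially) lines of rational slope, which after intersecting with $\N_0^2$ yield arithmetic progressions. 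The non-preperiodicity of $\bfp_i$ is used to ensure that at least one coordinate of $f_i^{n_i}(\bfp_i)$ is genuinely non-constant in $n_i$, which is what rules out degenerate situations and, combined with the eigenvalue hypothesis, forces the recurrences to grow — so the solution set cannot contain a two-dimensional piece and the arithmetic progressions are the only unbounded behaviour allowed.

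\medskip

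To finish, I would handle general $r$ by induction: the set $S$ is the intersection over pairs $(i,j)$ of the sets $S_{ij}=\{(n_i,n_j):f_i^{n_i}(\bfp_i)=f_j^{n_j}(\bfp_j)\}$ pulled back to $\N_0^r$, each of which is a finite union of affine-linear pieces of rank $\le 1$ by the $r=2$ case; intersecting finitely many such unions keeps the property of being a finite union of translates of rank-$\le 1$ subsemigroups of $\N_0^r$, since intersecting two lines of rational slope in $\N_0^r$ is either empty, a point, or again a single arithmetic progression (a short linear-algebra / congruence computation). One must also check that the pieces can be normalized to the exact shape $\{(n_1k+\ell_1,\dots,n_rk+\ell_r):k\in\N_0\}$ asserted in the statement, which is routine once one knows each piece is a coset of a rank-$\le1$ subsemigroup.

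\medskip

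\emph{The main obstacle} I expect is step two: proving that the solution set of a two-variable polynomial--exponential equation with repeated characteristic roots (so the $P_{i,j},Q_k$ have positive degree) is exactly a finite union of arithmetic progressions. The diagonalizable case reduces cleanly to $S$-unit equations as in \cite{GTZ-Bordeaux}, but with Jordan blocks of size $>1$ the polynomial factors obstruct a direct unit-equation argument, and one genuinely needs the $p$-adic analytic interpolation together with a careful Weierstrass-preparation analysis — and in particular a nontrivial argument (using the no-root-of-unity hypothesis to compare $p$-adic and archimedean growth, or a Galois-conjugacy/Newton-polygon argument) to rule out $p$-adic analytic solution curves that are not graphs of lines. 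Making that last ruling-out precise, uniformly over all coordinates and all pairs $(i,j)$, is where the real work lies.
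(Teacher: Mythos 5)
Your reductions (to $r=2$, to the linear case by conjugating away the translation using a fixed point, which exists because $1$ is not an eigenvalue, and to Jordan form) match the paper. But the heart of your argument rests on a claim that is false. You assert that for two linear recurrence sequences $(a_m)$, $(b_n)$ none of whose characteristic roots is a root of unity, the solution set of $a_m=b_n$ is a finite union of finite sets, vertical/horizontal pieces, and arithmetic progressions. Take $a_m=2^m$ and $b_n=n2^n$ (both non-degenerate, sole characteristic root $2$, and both genuinely arising as coordinates of orbits: $b_n$ is the top entry of $J^n(0,2)^T$ for the $2\times 2$ Jordan block $J$ with eigenvalue $2$). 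Then $a_m=b_n$ forces $2^{m-n}=n$, so the solution set is $\{(2^j+j,\,2^j):j\in\N_0\}$, which is infinite with unbounded gaps and is not of the claimed form. The same example defeats the $p$-adic version of your claim: the zero locus of the interpolated function $2^x-y2^y$ passes through these lattice points along an analytic branch that is not a line, and the no-root-of-unity hypothesis does nothing to prevent this. So a coordinate-by-coordinate analysis followed by intersecting the resulting sets cannot work, because the individual coordinate equations simply do not have solution sets of the required shape.

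What rescues the theorem — and what the paper actually does — is that one must exploit the \emph{full system} of $N$ coordinate equations simultaneously, together with the precise degree structure of the polynomials produced by a Jordan block (the degrees drop by exactly one as you move down the block). Concretely, the paper uses Laurent's theorem on polynomial-exponential equations (Schmidt's reformulation, built on the Subspace Theorem) to match each nonzero entry $P_{i,k}(m)\alpha_i^m$ of $A^m\bfu$ with a single entry $Q(n)\beta_{j^*}^n$ on the other side, and then plays \emph{two} such matched equations against each other (one involving a top-degree polynomial, one involving a constant polynomial from the bottom of the same Jordan block) to force $\deg P=\deg Q$ and hence $\alpha_{\tilde i}^m=\omega\beta_{j^*}^n$ on the solution set; this is what confines the solutions to a single line, after which a one-variable Skolem--Mahler--Lech argument finishes. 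In my example above, the companion equation $2^mu_2=2^nv_2$ coming from the second coordinate is exactly what kills the spurious solutions. You correctly flagged the repeated-root case as "where the real work lies," but the fix is not a growth or Newton-polygon argument applied to one equation; it is the cross-comparison of several coordinate equations, which your outline does not contain.
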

The conclusion of this theorem would fail without the
assumption on the eigenvalues of the matrices $A_i$. For example,
let $N=1$, $r=2$, $A_1(x)=x+1$, $A_2(x)=2x$, $\bfp_1=0$, and $\bfp_2=1$,
then $S=\{(2^n,n):\ n\in\N_0\}$. Also, Theorem~\ref{main result linear algebra} fails if one does not assume the maps $f_i$ are affine, as shown by the following example. Let $r=2$, $n=1$, $f_1(x)=2x$, $f_2(x)=x^2$, $\bfp_1=1$ and $\bfp_2=2$; then $S=\{(2^n,n)\colon n\in \N_0\}$.

The organization of this paper is as follows. In Section~\ref{sec:linear algebra}, we present the proof of Theorem~\ref{main result linear algebra} which requires
a careful analysis of a certain system of polynomial-exponential equations in two variables. Some results on polynomial-exponential
equations are given in the next section following Schmidt's exposition
\cite{Sch03}. In Section~\ref{section proof}, Theorem~\ref{main result} is reduced to Theorem~\ref{main result linear algebra} thanks to the use of the $p$-adic exponential
map for an appropriate choice of the prime $p$.

We conclude this section with a brief discussion of the dynamical Mordell-Lang problem over fields of positive characteristic. We note right from the start that Question~\ref{main question} fails even in the simplest examples of affine maps defined over $\F_p(t)$. Indeed, let  $\Phi_i:\bA^1\lra \bA^1$ be affine maps given by $\Phi_1(x)=tx-t+1$ and $\Phi_2(x)=(t+1)x$. It is immediate to see that $\Phi_1^m(2)=t^m+1$ while $\Phi_2^n(1)=(t+1)^n$. Then the set $$S=\{(m,n)\in\N_0^2\colon \Phi_1^m(2)=\Phi_2^n(1)\}=\{(p^n,p^n)\colon n\in\N_0\}.$$
The above example stems from similar examples disproving a naive formulation of the Dynamical Mordell-Lang Conjecture in positive characteristic. 
A variant of the Dynamical Mordell-Lang Conjecture has been proposed by Scanlon and the first author
\cite[Chapter~13]{book}; however there are very few partial results
since even the case of $\Gmn$ seems to be closely related to
very difficult problems in diophantine geometry. We refer
the readers to the discussion in \cite[Section~13.3]{book}
 for more details. A deep theorem of Adamczewski and
 Bell \cite[Theorem~1.4]{AB12}
 implies that if $K$ is a field of characteristic $p>0$, then
 the set $S$ in Theorem~\ref{main result linear algebra} is
  $p$-automatic.

\medskip

{\bf Acknowledgments.} The first author is partially supported by NSERC and the second author is partially supported
	by a UBC-PIMS postdoctoral fellowship.


\section{Some diophantine equations involving linear recurrence sequences}\label{sec:diophantine}
\subsection{Some classical results} A large part of this
subsection follows the notation from Schmidt's article \cite{Sch03}.
All the sequences considered in this section are sequences
of complex numbers.
A tuple $(a_1,\ldots,a_k)$ of non-zero numbers is called
non-degenerate if $\displaystyle\frac{a_i}{a_j}$ is not a root of
unity for $1\leq i< j\leq k$. A linear recurrence sequence is called non-degenerate if the tuple of (non-zero) characteristic roots
is non-degenerate.
We begin with the following well-known
result:
\begin{theorem}[Skolem-Mahler-Lech]\label{thm:SML}
Let $\{u_n:\ n\in\N_0\}$ be a linear recurrence sequence. Then the set $Z:=\{n:\ u_n=0\}$ is
a finite union of arithmetic progressions. Furthermore, if
$u_n$ is non-degenerate then $Z$ is finite.
\end{theorem}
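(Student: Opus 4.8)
The plan is to prove this by the classical $p$-adic argument. We may assume $\{u_n\}$ is not eventually zero (the eventually-zero case is immediate for both assertions, with the usual convention that a non-degenerate sequence has at least one nonzero characteristic root). First I would put the recurrence in matrix form. After replacing $\{u_n\}$ by a suitable shift $\{u_{n+e}\}$ — which alters $Z$ and the set of nonzero characteristic roots in a controlled way — we may assume the (minimal) characteristic polynomial $\chi(t)=t^d-c_1t^{d-1}-\cdots-c_d$ has $c_d\ne0$. Letting $C$ be its companion matrix, $\det C=\pm c_d\ne0$, and there are vectors $\bfv,\bfw$ with $u_n=\bfv^\top C^n\bfw$ for all $n$. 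The finitely many coordinates of $C,\bfv,\bfw$ lie in a finitely generated field extension of $\Q$; by a theorem of Cassels such a field embeds into $\Q_p$ for infinitely many primes $p$, so after clearing denominators I would fix an \emph{odd} prime $p$ and an embedding under which $C\in\GL_d(\Z_p)$ (i.e.\ $\det C$ is a $p$-adic unit) and $\bfv,\bfw\in\Z_p^{\,d}$.

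Next, since $\GL_d(\Z/p\Z)$ is finite there is $M\in\N$ with $C^M\equiv I\pmod p$; write $C^M=I+pB$ with $B\in M_d(\Z_p)$. For each residue $r\in\{0,\ldots,M-1\}$ I would introduce
\[
F_r(z):=\bfv^\top C^r\Bigl(\sum_{k\ge0}\binom{z}{k}p^kB^k\Bigr)\bfw\qquad(z\in\Z_p).
\]
Because $p$ is odd, $\|C^M-I\|\le p^{-1}<p^{-1/(p-1)}$, so the binomial series converges on $\Z_p$; rearranging it as a power series in $z$ one checks its coefficients tend $p$-adically to $0$, so $F_r$ is a $p$-adic analytic function on $\Z_p$. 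By construction $F_r(j)=\bfv^\top C^r(C^M)^j\bfw=u_{Mj+r}$ for every $j\in\N_0$, so $F_r$ interpolates the subsequence of $\{u_n\}$ along the residue class $r$ modulo $M$.

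Now I would invoke Strassmann's theorem: a nonzero power series over $\Q_p$ converging on $\Z_p$ has only finitely many zeros in $\Z_p$. Hence for each $r$ one of two things happens: either $F_r\equiv0$, in which case the whole progression $\{n:\ n\equiv r\pmod M\}$ is contained in $Z$; or $F_r$ has finitely many zeros in $\Z_p$, contributing only finitely many elements to $Z$ in that class. Taking the union over $r=0,\ldots,M-1$ and adding back the finitely many discarded initial indices shows $Z$ is a finite union of arithmetic progressions together with a finite set, hence — a finite set being a finite union of singletons — a finite union of arithmetic progressions. For the non-degenerate case, suppose $Z$ were infinite; then some $F_r\equiv0$, i.e.\ $u_{Mj+r}=0$ for all $j$. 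Writing $u_n=\sum_{i=1}^k P_i(n)\alpha_i^n$ for large $n$ with the $\alpha_i$ distinct and nonzero, the $P_i$ nonzero, and $k\ge1$ (as $\{u_n\}$ is not eventually zero, $c_d\ne 0$ being a unit), we get $0=\sum_i \alpha_i^r P_i(Mj+r)\,(\alpha_i^M)^j$ for all large $j$. Non-degeneracy forces the $\alpha_i^M$ to be pairwise distinct, and a generalized power sum with distinct bases and not all polynomial coefficients zero cannot vanish identically (for instance, view it as a nonzero linear recurrence sequence with the distinct $\alpha_i^M$ among its characteristic roots, or use a Vandermonde argument); this contradiction gives that $Z$ is finite.

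The main obstacle is really Steps~1 and~2 combined: one must turn an arbitrary complex recurrence into one defined over $\Z_p$ with the companion matrix invertible modulo $p$ — which rests on Cassels' embedding of finitely generated characteristic-zero fields into $\Q_p$ — and then recognize $j\mapsto u_{Mj+r}$ as the restriction of a genuine $p$-adic analytic function on $\Z_p$ via the binomial expansion of $(I+pB)^z$ (the oddness of $p$ being used to secure convergence). Granting these two points, the first assertion follows formally from Strassmann's theorem, and the second from the observation that non-degeneracy prevents the characteristic roots from collapsing when one passes to an arithmetic subsequence.
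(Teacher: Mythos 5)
Your argument is the standard $p$-adic proof of Skolem--Mahler--Lech (Lech/Cassels embedding of the finitely generated field of definition into $\Q_p$ with the relevant elements becoming units, interpolation of $j\mapsto u_{Mj+r}$ by a $p$-adic analytic function via the binomial expansion of $(I+pB)^z$, and Strassmann's theorem), and it is correct, including the correct use of an odd prime for convergence and the correct handling of the non-degenerate case by noting that the roots $\alpha_i^M$ remain distinct. The paper does not prove this theorem at all --- it is quoted as a classical result --- so there is no in-paper argument to compare against; your write-up is a faithful and complete rendition of the canonical proof.
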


We now consider non-degenerate linear recurrence sequences
that are \emph{not} of the form $P(n)\alpha^n$ where $\alpha$ is a
root of unity. It is convenient to write such
a sequence $u$ as:
\begin{equation}\label{eq:convention u}
u_n=\sum_{i=0}^q P_i(n)\alpha_i^n
\end{equation}
\emph{with the following convention} \cite[Section~11]{Sch03}. If some root of the characteristic
polynomial is a root of unity, let this root be
$\alpha_0$, and $\alpha_1,\ldots,\alpha_q$ the other roots. If no
root of the characteristic polynomial is
a root of unity, let these roots
be $\alpha_1,\ldots,\alpha_q$, and set $\alpha_0=1$, $P_0=0$.
Let $v$ be another sequence written as
\begin{equation}\label{eq:convention v}
v_n=\sum_{i=0}^{q'}Q_i(n)\beta_i^n
\end{equation}
with the same convention. The two sequences $u$ and $v$ are said to be
\emph{related} if $q=q'$ and after a suitable reordering of 
$\beta_1,\ldots,\beta_q$ we have:
$$\alpha_i^a=\beta_i^b\ \text{for every $i\in\{1,\ldots,q\}$}$$
for certain non-zero integers $a$ and $b$.

The next result follows from Schmidt's reformulation
of a theorem by Laurent whose
proof uses the celebrated Subspace Theorem:
\begin{theorem}[Laurent]\label{thm:Schmidt reformulation}
Let $u$ and $v$ be non-degenerate linear recurrence sequences
given by \eqref{eq:convention u}, \eqref{eq:convention v}, and 
under the convention described above. Consider the equation:
$$u_m=v_n\ \text{for $(m,n)\in\N_0^2$}$$
and let $Z$ be the set of solutions.
We have the following:
\begin{itemize}
\item [(a)] If $u$ and $v$ are not related then $Z$ is finite.
\item [(b)] $P_0(m)\alpha_0^m=Q_0(n)\beta_0^n$ for all but finitely many
$(m,n)\in Z$.
\end{itemize} 
\end{theorem}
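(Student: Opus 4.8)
The plan is to deduce this from the Subspace Theorem, following the route indicated in \cite{Sch03}; the statement is a reformulation of Laurent's theorem, so if I were writing it up for the paper I would ultimately just cite \cite{Sch03}, but here is the argument I would sketch. The first move is to package the single equation $u_m=v_n$ as a polynomial-exponential equation in the two-dimensional variable $\mathbf z=(m,n)\in\N_0^2$. Writing $\alpha_i^m=(\alpha_i,1)^{\mathbf z}$ and $\beta_j^n=(1,\beta_j)^{\mathbf z}$, the function $G(\mathbf z):=u_m-v_n$ becomes $G(\mathbf z)=\sum_{k=1}^{t}p_k(\mathbf z)\,\boldsymbol\omega_k^{\mathbf z}$, where $\boldsymbol\omega_1,\dots,\boldsymbol\omega_t\in(\C^\ast)^2$ are pairwise distinct — taken from among the points $(\alpha_i,1)$ and $(1,\beta_j)$, with the contributions of $P_0(m)\alpha_0^m$ and $-Q_0(n)\beta_0^n$ merged exactly when $(\alpha_0,1)=(1,\beta_0)$ — and each $p_k$ is a nonzero polynomial. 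The non-degeneracy of $u$ and of $v$ translates into: whenever $\boldsymbol\omega_k\neq\boldsymbol\omega_\ell$ come from the same sequence, $\boldsymbol\omega_k/\boldsymbol\omega_\ell$ is not a root of unity in both coordinates. Then I would descend everything to a number field $L$ and a finite set $S$ of places of $L$ (the archimedean ones together with those dividing some $\alpha_i$ or $\beta_j$), so that each coordinate of each $\boldsymbol\omega_k$ lies in $\cO_{L,S}^\ast$.

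The crucial step is the vanishing-subsum dichotomy. For $\mathbf z$ in $Z=\{\mathbf z:G(\mathbf z)=0\}$, choose a minimal nonempty $I=I(\mathbf z)\subseteq\{1,\dots,t\}$ with $\sum_{k\in I}p_k(\mathbf z)\boldsymbol\omega_k^{\mathbf z}=0$ and no proper nonempty subsum vanishing at $\mathbf z$; since there are finitely many candidate sets $I$, it suffices to understand $Z_I$ for each. If $|I|=1$ the relation forces $p_k(\mathbf z)=0$, confining $\mathbf z$ to a proper subvariety, which I would handle by a routine induction on $t$ (restricting $G$ to each component — again a generalized power sum — and recording a component as an exceptional coset when $G$ vanishes on it identically). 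If $|I|\geq 2$, dividing through by one term gives $\sum_{k\in I,\ k\neq k_0}\bigl(-p_k(\mathbf z)/p_{k_0}(\mathbf z)\bigr)(\boldsymbol\omega_k/\boldsymbol\omega_{k_0})^{\mathbf z}=1$; here the coefficients have logarithmic height $O(\log\max(|m|,|n|))$ while the $S$-unit monomials $(\boldsymbol\omega_k/\boldsymbol\omega_{k_0})^{\mathbf z}$ have height linear in $\max(|m|,|n|)$ unless $\boldsymbol\omega_k/\boldsymbol\omega_{k_0}$ is a root of unity in both coordinates. The quantitative Subspace Theorem, in the version admitting polynomially growing coefficients (\cite[Section~11]{Sch03}), places all but finitely many such $\mathbf z$ into finitely many proper linear subspaces of the coefficient space; pulling back, $Z_I$ is, up to a finite set, contained in finitely many translates $\mathbf a+\Lambda$ of subgroups $\Lambda\leq\Z^2$ on which a proper subrelation of $\{\boldsymbol\omega_k:k\in I\}$ already holds identically. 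By minimality of $I$ this cannot happen unless $\Lambda$ has rank $0$ (so $\mathbf z$ ranges over a finite set) or the relevant $\boldsymbol\omega_k/\boldsymbol\omega_{k_0}$ are roots of unity in both coordinates.

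Finally I would match the surviving cases to the hypotheses. An infinite $Z$ forces a positive-rank coset $\mathbf a+\Lambda$ on which $G$ vanishes identically; tracing which $\boldsymbol\omega_k$ are responsible, a block of genuinely exponential terms — certain $(\alpha_i,1)$ and $(1,\beta_j)$ — must coincide on $\Lambda$, which upon choosing a generator of $\Lambda$ amounts to relations $\alpha_i^{a}=\beta_j^{b}$ with nonzero integers $a,b$ together with a matching of the polynomials $P_i,Q_j$ along the coset; this is precisely the condition that $u$ and $v$ be related, which yields (a). For (b) one uses that $P_0(m)\alpha_0^m$ and $-Q_0(n)\beta_0^n$ are the only terms of bounded height, so the subsum analysis (together with a Skolem--Mahler--Lech argument to rule out the degenerate "vertical/horizontal" cosets) shows that away from a finite set these two terms can be balanced by nothing other than each other, i.e. $P_0(m)\alpha_0^m=Q_0(n)\beta_0^n$ for all but finitely many $(m,n)\in Z$. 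I expect the genuine obstacle to be the middle step — arranging the Subspace Theorem bookkeeping so that the polynomial coefficients are truly absorbed as low-height data, and then extracting from the finitely many exceptional subspaces the clean structure "$Z$ is a finite union of cosets on which $G\equiv 0$, plus a finite set" and identifying "$G\equiv 0$ on a coset" with the relatedness condition. Since this is exactly the content of Laurent's theorem and of \cite[Section~11]{Sch03}, in the paper I would invoke that reference rather than reproduce the argument.
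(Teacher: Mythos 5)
The paper's entire proof of this theorem is a citation of \cite[Theorem~11.2]{Sch03}, and you correctly end by deferring to the same reference, so your approach matches the paper's. Your sketch of the underlying argument (vanishing-subsum decomposition plus the Subspace Theorem with slowly growing coefficients, with relatedness emerging from the positive-rank cosets on which a subsum vanishes identically) is a faithful outline of how Laurent's theorem is actually proved, but none of it is needed for the paper, which treats the result as a black box.
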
 
\begin{proof}
This follows from \cite[Theorem~11.2]{Sch03}.
\end{proof}

\subsection{Some consequences}
\begin{proposition}\label{prop:Schmidt note}
Let $k\in\N$, let $a,b_1,\ldots,b_k\in \C^*$ none of which
is a root of unity. Let $P(x),Q_1(x),\ldots,Q_k(x)\in\C[x]\setminus\{0\}$ and let $c\in \C$. Assume that 
$(b_1,\ldots,b_k)$ is non-degenerate. 
If $k\geq 2$ or $c\neq 0$, then 
the set
$$Z:=\left\{(m,n)\in\N_0^2:\ P(m)a^m=c+\sum_{i=1}^k Q_i(n)b_i^n\right\}$$ 
is finite.
\end{proposition}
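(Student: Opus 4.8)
The plan is to deduce Proposition~\ref{prop:Schmidt note} from Theorem~\ref{thm:Schmidt reformulation} by viewing the left- and right-hand sides of the defining equation as two linear recurrence sequences and checking that, under the hypotheses $k \geq 2$ or $c \neq 0$, these two sequences cannot be \emph{related} in the sense defined above. First I would set $u_m := P(m)a^m$ and $v_n := c + \sum_{i=1}^k Q_i(n) b_i^n$. The sequence $u$ is a linear recurrence sequence whose unique characteristic root is $a$; since $a$ is not a root of unity, $u$ is non-degenerate and, in the convention of \eqref{eq:convention u}, we have $q = 1$, $\alpha_1 = a$, $P_1 = P$, and $\alpha_0 = 1$, $P_0 = 0$. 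Similarly $v$ is a linear recurrence sequence: its characteristic roots are among $b_1, \ldots, b_k$ and (if $c \neq 0$) the root $1$ coming from the constant term $c$. Since the $b_i$ are not roots of unity and $(b_1,\ldots,b_k)$ is non-degenerate, $v$ is non-degenerate; in the convention of \eqref{eq:convention v} we have $q' = k$, the roots $\beta_1,\ldots,\beta_k$ being $b_1,\ldots,b_k$ in some order with $Q_i$ the corresponding polynomial coefficients, while $\beta_0 = 1$ and $Q_0 = c$ (interpreting $Q_0 = 0$ if $c = 0$, consistent with the stated convention).

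Next I would apply Theorem~\ref{thm:Schmidt reformulation} to $u$ and $v$. If $u$ and $v$ are not related, then part~(a) gives immediately that $Z$ is finite, and we are done. So the crux is to rule out the case that $u$ and $v$ are related. Suppose toward a contradiction that they are. Relatedness requires first that $q = q'$, i.e. $k = 1$. Combined with the hypothesis, this forces $c \neq 0$. But then, applying part~(b) of Theorem~\ref{thm:Schmidt reformulation}, we get $P_0(m)\alpha_0^m = Q_0(n)\beta_0^n$ for all but finitely many $(m,n) \in Z$; with our conventions this reads $0 = c$ for all but finitely many solutions, contradicting $c \neq 0$ \emph{provided $Z$ is infinite}. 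If instead $Z$ is finite, there is nothing to prove. Hence in every case $Z$ is finite.

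The main obstacle — and the only genuinely delicate point — is handling the bookkeeping around the convention in \eqref{eq:convention u}–\eqref{eq:convention v} when $c = 0$ versus $c \neq 0$, and making sure the ``relatedness'' dichotomy is applied to sequences that are genuinely of the required form (non-degenerate, not equal to $P(n)\zeta^n$ for $\zeta$ a root of unity). For $u$, since $a$ is not a root of unity, $u = P(m)a^m$ is not of that excluded shape. For $v$, if $k \geq 2$ then $v$ has at least two distinct non-root-of-unity characteristic roots among the $b_i$, so it is certainly not of the excluded shape; if $k = 1$ and $c \neq 0$ then $v_n = c + Q_1(n)b_1^n$ has the two distinct roots $1$ and $b_1$, again not of the excluded shape. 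The remaining sub-case $k = 1$, $c = 0$ is exactly the one excluded by hypothesis, so it never arises. I would also remark that when $q' = 0$ — which cannot happen here since $k \geq 1$ — no issue occurs. A clean way to organize the write-up is: (1) dispose of the trivial case $Z$ finite; (2) set up $u, v$ and verify non-degeneracy and the convention; (3) observe that relatedness forces $k = 1$, hence $c \neq 0$; (4) invoke part~(b) to derive $c = 0$, a contradiction; (5) conclude via part~(a) that $Z$ is finite.
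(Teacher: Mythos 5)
Your proof is correct and follows essentially the same route as the paper: part (a) of Theorem~\ref{thm:Schmidt reformulation} disposes of $k\geq 2$ (where $q=1\neq k=q'$ prevents relatedness), and part (b) forces $c=0$ whenever $Z$ is infinite, contradicting the remaining case $k=1$, $c\neq 0$. Your extra bookkeeping about the convention and non-degeneracy is sound but not a different argument.
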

\begin{proof}
When $k\geq 2$, the two linear recurrence sequences
$u_m=P(m)a^m=0\cdot 1^m+P(m)a^m$
and $v_n=c\cdot 1^n+\displaystyle\sum_{i=1}^k Q_i(n)b_i^n$
are not related. Hence $Z$ is finite by part (a) of Theorem~\ref{thm:Schmidt reformulation}. 
If $Z$ is infinite, we have 
$c=0$ by part (b) of Theorem~\ref{thm:Schmidt reformulation}.
\end{proof}

If $p$ is a prime, let $\C_p$ denote the completion of
the algebraic closure of $\Q_p$. It is well-known that $\C_p$ is algebraically closed. We have:
\begin{lemma}\label{lem:value>1}
Let $\gamma\in\C^*$ that is not a root of unity and let
$F$ be a finitely generated subfield of $\C$ containing $\gamma$. Then there exists a field $\cF$ that is either $\C$ or
$\C_p$ together with its usual absolute value $\vert\cdot\vert_0$ and an embedding $\sigma:\ F\rightarrow \cF$
such that $\vert\sigma(\gamma)\vert_0>1$.
\end{lemma}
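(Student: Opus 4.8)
The plan is to prove Lemma~\ref{lem:value>1} by a straightforward product formula / Weil-height argument, distinguishing the cases according to whether $\gamma$ is an algebraic number or transcendental over $\Q$. First I would handle the case $\gamma\in\oQ$. Since $\gamma$ is algebraic but not a root of unity, by Kronecker's theorem $\gamma$ is not an algebraic integer all of whose conjugates lie on the unit circle; hence some place $v$ of the number field $\Q(\gamma)$ satisfies $|\gamma|_v>1$. If $v$ is archimedean, embed $\Q(\gamma)\hookrightarrow\C$ so that this place is realized, and extend this embedding to $F$ (possible since $\C$ has infinite transcendence degree over $\Q$, so any finitely generated extension of $\Q(\gamma)$ embeds into $\C$ over $\Q(\gamma)$); take $\cF=\C$. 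If $v$ is non-archimedean lying over a rational prime $p$, then $\Q(\gamma)_v$ embeds into $\C_p$, and again we extend the embedding $\Q(\gamma)\hookrightarrow\C_p$ to $F\hookrightarrow\C_p$ using that $\C_p$ is algebraically closed of infinite transcendence degree over $\Q_p$; take $\cF=\C_p$ with its usual absolute value.

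Next I would treat the case where $\gamma$ is transcendental over $\Q$. Write $F=\Q(t_1,\dots,t_d)(\theta)$ where $t_1,\dots,t_d$ is a transcendence basis of $F/\Q$ and $\theta$ is algebraic over $\Q(t_1,\dots,t_d)$; we may assume $\gamma$ lies in this field. The key point is that $F$ can be embedded into $\C$ in many ways: choose the transcendence basis to include (a $\Q$-algebraically-independent set extending to) elements that can be specialized, or more simply, use that $\C$ contains a transcendence basis of itself of the cardinality of the continuum, so there is an embedding $F\hookrightarrow\C$. I then need to arrange $|\sigma(\gamma)|_0>1$ for some such embedding. The cleanest way: since $\gamma$ is transcendental over $\Q$ it is transcendental over $\Q$, so in particular $\gamma$ and $\gamma^{-1}$ generate the same purely transcendental piece; but that alone does not give $|\gamma|>1$. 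Instead I would argue by contradiction: if $|\sigma(\gamma)|_0\le 1$ for every embedding $\sigma:F\to\C$ and every embedding into every $\C_p$ (with usual absolute value $\le 1$ automatically for elements transcendental... no), so let me restructure.

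The uniform way to handle both cases is via heights. Embed $F$ into $\C$ arbitrarily; then $\gamma\in\C^*$ is a fixed complex number and we simply ask: is $\gamma$ or is some Galois/embedding conjugate of $\gamma$ of absolute value $>1$? If $\gamma$ is transcendental we are already given it as a complex number with $|\gamma|_0$ some positive real; if $|\gamma|_0>1$ take $\cF=\C$, $\sigma=\mathrm{id}$, done; if $|\gamma|_0<1$ replace $\gamma$ by $\gamma^{-1}$ — but the lemma is stated for $\gamma$, not $\gamma^{-1}$, so I cannot do that directly. So the real content is: for $\gamma$ not a root of unity, there is \emph{some} absolute value (archimedean or $p$-adic) on some embedding making $|\gamma|>1$. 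For $\gamma$ transcendental this is automatic using $p$-adic valuations: pick any embedding $F\hookrightarrow\C$; if $|\gamma|_0>1$ in $\C$ we are done, otherwise $\gamma$ lies in a finitely generated ring on which we can choose a discrete valuation $v$ with $v(\gamma)<0$ (since $\gamma$ is not a unit times a root of unity — indeed any non-zero non-unit, or if $\gamma$ is a unit in every finitely generated subring then $\gamma$ is a constant algebraic number, contradicting transcendence). Concretely: the subring $\Z[\gamma,\gamma^{-1},\dots]$ of $F$ generated by finitely many elements has a maximal ideal $\fm$ with residue characteristic $p$; localizing and completing gives an embedding of the fraction field into $\C_p$; arrange the valuation so $\gamma\notin$ the valuation ring, i.e. $|\gamma|>1$.

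The main obstacle I expect is the bookkeeping of extending embeddings: given the ``good'' place on the small field $\Q(\gamma)$ (or on a small finitely generated subfield), one must extend it to all of $F$ while staying inside the \emph{same} completed field $\cF\in\{\C,\C_p\}$. This works because both $\C$ and $\C_p$ are algebraically closed and have infinite (indeed uncountable) transcendence degree over the prime field resp.\ over $\Q_p$, so any countably (or finitely) generated field extension of a subfield of $\cF$ re-embeds into $\cF$ over that subfield; I would cite the standard model-theoretic / field-theoretic fact that an algebraically closed field of infinite transcendence degree is existentially closed and absorbs such extensions. Once the embedding is in place the inequality $|\sigma(\gamma)|_0>1$ is preserved since $\sigma$ restricted to $\Q(\gamma)$ was chosen to realize that place. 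I would also note the subtlety that in the transcendental case one should pick the valuation on a finitely generated subring containing $\gamma$ so that $\gamma$ has negative valuation — guaranteed because $\gamma$ is transcendental, hence not integral over $\Z$, hence not a unit in every such subring, so Krull's theorem supplies a height-one prime avoiding $\gamma$ in the denominator-free direction; equivalently, $1/\gamma$ sits in some maximal ideal of $\Z[1/\gamma]$, giving the desired place.
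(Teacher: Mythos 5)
Your algebraic case is essentially the paper's own argument: Kronecker's theorem (in the form that a nonzero algebraic number with $|\gamma|_v\le 1$ at every place is a root of unity) yields a place $v$ of $\Q(\gamma)$ with $|\gamma|_v>1$, realized by an embedding into $\C$ or $\C_p$, and one then extends to $F$ using that $\cF$ is algebraically closed of uncountable transcendence degree. That part is fine.

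In the transcendental case you take a genuinely different and unnecessarily hard route, and as written it has a gap. The paper's argument is one line: since $\gamma$ is transcendental, $\Q(\gamma)\cong\Q(t)$, so one is free to \emph{define} $\sigma$ on $\Q(\gamma)$ by sending $\gamma$ to any transcendental complex number of absolute value greater than $1$, and then extend to $F$ as before; no valuation theory is needed. You instead fix an embedding of $F$ into $\C$ first and, when $|\gamma|_0\le 1$ there, try to manufacture a $p$-adic place from a maximal ideal $\fm$ of a finitely generated subring such as $\Z[1/\gamma]$. The step ``localizing and completing gives an embedding of the fraction field into $\C_p$'' is not justified: a maximal ideal of a finitely generated $\Z$-algebra of Krull dimension at least $2$ (e.g.\ $(p,x)\subset\Z[x]$) does not single out a rank-one valuation, the completed local ring is not a valuation ring, and producing an actual embedding of its fraction field into $\C_p$ requires choosing where $1/\gamma$ goes among the small transcendental elements of $\C_p$ --- which is precisely the ``send $\gamma$ to a transcendental element of absolute value greater than $1$'' move you could have made in $\C$ from the start. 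So the transcendental case is salvageable, but the embedding into $\C_p$ is asserted rather than constructed and should be replaced by the direct specialization argument.
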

\begin{proof}
It suffices to prove that there exist $\cF$ that is $\C$ or $\C_p$ and an embedding $\sigma:\ \Q(\gamma)\rightarrow \cF$
satisfying $\vert\sigma(\gamma)\vert_0>1$. Then it is possible
to extend $\sigma$ to $F$ since $\cF$ is algebraically closed and has infinite (in fact, uncountable) transcendence degree over $\Q$.

When $\gamma$ is algebraic, since $\gamma\in\C^*$ is not
a root of unity, a result of Kronecker (see, for instance,
\cite[Theorem~1.5.9]{BG06})
gives that
there is an absolute value  $|\cdot |_v$ of the 
number field $\Q(\gamma)$ such that $|\gamma|_v>1$. This absolute value $|\cdot |_v$ gives rise to the desired embedding 
into $\C$ if $v$ is archimedean, or into $\C_p$ if $|\cdot |_v$
restricts to the $p$-adic absolute value of $\Q$.
When $\gamma$ is transcendental, we simply map $\gamma$
to any transcendental number outside the unit disk.
\end{proof}

\begin{proposition}\label{prop:deg difference}
Let $\alpha,\beta_1,\beta_2\in\C^*$ none of which is
a root of unity. Let $P_1(x)$, $P_2(x)$, $Q_1(x)$,
and $Q_2(x)$ be non-zero polynomials with complex coefficients. Let $Z$ be the set of
$(m,n)\in \N_0^2$ satisfying both $P_1(m)\alpha^m=Q_1(n)\beta_1^n$ and $P_2(m)\alpha^m=Q_2(n)\beta_2^n$. 
If $Z$ is infinite then $\displaystyle\frac{\beta_1}{\beta_2}$ is a root
of unity and $\deg(P_2)-\deg(P_1)=\deg(Q_2)-\deg(Q_1)$.
\end{proposition}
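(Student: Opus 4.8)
The plan is to exploit the two equations simultaneously to force the ratio $\beta_1/\beta_2$ into a very constrained position, then compare degrees by an asymptotic argument along the (infinite) solution set $Z$. First I would dispose of the degenerate possibility that $\alpha$, $\beta_1$, or $\beta_2$ equals a root of unity: this is excluded by hypothesis, so each of the one-term linear recurrence sequences $m\mapsto P_i(m)\alpha^m$ and $n\mapsto Q_i(n)\beta_i^n$ is non-degenerate and is \emph{not} of the form ``(polynomial)$\times$(root of unity)$^n$'', which is exactly the setting in which Theorem~\ref{thm:Schmidt reformulation} (Laurent's theorem in Schmidt's formulation) applies. Applying part~(a) of that theorem to the first equation $P_1(m)\alpha^m=Q_1(n)\beta_1^n$: since $Z$ is infinite, the two sequences must be \emph{related}, and since each has a single characteristic root, relatedness means precisely that there are non-zero integers $a_1,b_1$ with $\alpha^{a_1}=\beta_1^{b_1}$. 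Doing the same with the second equation gives non-zero integers $a_2,b_2$ with $\alpha^{a_2}=\beta_2^{b_2}$. Eliminating $\alpha$ between $\alpha^{a_1 a_2}=\beta_1^{b_1 a_2}$ and $\alpha^{a_1 a_2}=\beta_2^{b_2 a_1}$ yields $\beta_1^{b_1 a_2}=\beta_2^{b_2 a_1}$, a multiplicative relation between $\beta_1$ and $\beta_2$ with non-zero integer exponents.

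To upgrade this multiplicative relation to the statement that $\beta_1/\beta_2$ is a root of unity, I would use Lemma~\ref{lem:value>1}: working in a finitely generated subfield $F\subseteq\C$ containing $\alpha,\beta_1,\beta_2$ and all the polynomial coefficients, I can embed $F$ into a complete valued field $\cF$ (either $\C$ or some $\C_p$) so that $|\sigma(\alpha)|_0>1$. Then along the infinitely many $(m,n)\in Z$ we have $|\sigma(P_1(m))|_0\,|\sigma(\alpha)|_0^m=|\sigma(Q_1(n))|_0\,|\sigma(\beta_1)|_0^n$ and similarly for the index $2$. Since $|\sigma(\alpha)|_0>1$, the left-hand sides grow (the polynomial factor is subexponential), forcing $m\to\infty$ and $n\to\infty$ along $Z$; and dividing the two absolute-value equations, the exponential part is $(|\sigma(\beta_1)|_0/|\sigma(\beta_2)|_0)^n$ while the rest is bounded between two powers of $n$, which is impossible unless $|\sigma(\beta_1)|_0=|\sigma(\beta_2)|_0$. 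Running this for \emph{every} embedding $\sigma$ of $F$ into $\C$ and into each $\C_p$ shows that $\beta_1/\beta_2$ is a unit at every place of a suitable number field in the algebraic case (whence, again by Kronecker's theorem, a root of unity), and is forced to be algebraic (indeed a root of unity) in the transcendental case by the same place-by-place control together with the fact that a transcendental $\beta_1/\beta_2$ would admit an embedding sending it off the unit circle. Alternatively, and more cleanly, once $\beta_1^{b_1 a_2}=\beta_2^{b_2 a_1}$ is in hand one may directly combine it with the analogous relations $\alpha^{a_i}=\beta_i^{b_i}$ to see that $\beta_1/\beta_2$ lies in the (finitely generated) multiplicative group generated by $\alpha$, and then the place-by-place estimate above applied to $\beta_1/\beta_2$ itself (rather than separately to $\beta_1,\beta_2$) shows $|\sigma(\beta_1/\beta_2)|_0=1$ for all $\sigma$ into all $\cF$, which by Lemma~\ref{lem:value>1} (contrapositive) forces $\beta_1/\beta_2$ to be a root of unity.

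Finally, for the degree equality, fix an embedding $\sigma$ with $|\sigma(\alpha)|_0>1$ as above, so that $m,n\to\infty$ along $Z$. Write $\zeta:=\beta_1/\beta_2$, a root of unity of some order $d$, and restrict to the infinite subset of $Z$ on which $n$ lies in a fixed residue class modulo $d$, so that $\zeta^n$ is a constant $\xi$ there. On this subset, dividing the first equation by the second gives $P_1(m)/P_2(m)=\bigl(Q_1(n)/Q_2(n)\bigr)\,\xi^{\,n}\cdot(\beta_2/\beta_1)^{0}$—more precisely $P_1(m)/P_2(m)=\xi\,Q_1(n)/Q_2(n)$ (the exponential factors cancel because $\beta_1^n/\beta_2^n=\zeta^n=\xi$ is constant along the subsequence). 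Thus as $m,n\to\infty$ the ratio $P_1(m)/P_2(m)$, which behaves like $(\text{const})\,m^{\deg P_1-\deg P_2}$, equals a constant times $Q_1(n)/Q_2(n)$, which behaves like $(\text{const})\,n^{\deg Q_1-\deg Q_2}$. Taking $|\cdot|_0$ and then logarithms, $(\deg P_1-\deg P_2)\log m=(\deg Q_1-\deg Q_2)\log n+O(1)$; since both equations $P_i(m)\alpha^m=Q_i(n)\beta_i^n$ also give, after taking absolute values and logarithms, $m\log|\sigma(\alpha)|_0=n\log|\sigma(\beta_i)|_0+O(\log n)$, we get $\log m=\log n+O(1)$ along the subsequence, and hence $\deg P_1-\deg P_2=\deg Q_1-\deg Q_2$, i.e. $\deg(P_2)-\deg(P_1)=\deg(Q_2)-\deg(Q_1)$, as claimed. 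The main obstacle I anticipate is the careful handling of the transcendental case in the middle step: ensuring that the multiplicative relation among $\beta_1,\beta_2,\alpha$ together with the absolute-value control over \emph{all} completions genuinely forces a root of unity, rather than merely a unit, may require invoking the structure of finitely generated multiplicative groups (a Kronecker-type statement over finitely generated fields) in place of the classical number-field Kronecker theorem used in Lemma~\ref{lem:value>1}.
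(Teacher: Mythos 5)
Your argument for the second assertion (the degree identity) is essentially the paper's: divide the two equations, use that $|(\beta_1/\beta_2)^n|=1$ to compare $|P_1(m)/P_2(m)|$ with $|Q_1(n)/Q_2(n)|$, and pin down the growth of $m$ against $n$ by choosing, via Lemma~\ref{lem:value>1}, an embedding with $|\sigma(\alpha)|_0>1$. But your route to the first assertion, that $\beta_1/\beta_2$ is a root of unity, has a genuine gap. Applying Theorem~\ref{thm:Schmidt reformulation}(a) to each equation \emph{separately} only yields relations $\alpha^{a_1}=\beta_1^{b_1}$ and $\alpha^{a_2}=\beta_2^{b_2}$, hence $(\beta_1/\beta_2)^{b_1b_2}=\alpha^{a_1b_2-a_2b_1}$; this is consistent with $\beta_1/\beta_2$ not being a root of unity (e.g.\ $\beta_1=\alpha^2$, $\beta_2=\alpha$), so everything hinges on your ``upgrade'' step. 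That step, as written, does not close: to force $|\sigma(\beta_1)|_0=|\sigma(\beta_2)|_0$ you need $m=O(n)$ and subexponential control of $|\sigma(P_i(m))|_0$, $|\sigma(Q_i(n))|_0$ from below; when Lemma~\ref{lem:value>1} hands you a $p$-adic embedding (which it may, e.g.\ for $\alpha$ algebraic with all archimedean conjugates inside the unit disc) and the coefficients of the $P_i,Q_i$ are transcendental, there is no a priori lower bound on $|P_i(m)|_p$ along a sparse infinite subset of $Z$, and the growth comparison collapses. Your fallback --- ``all places give absolute value $1$, hence root of unity'' --- is precisely the Kronecker-type statement over finitely generated fields that you flag as an obstacle; it is not available off the shelf, and Lemma~\ref{lem:value>1} is stated only in the direction you cannot use (existence of \emph{one} large place for a non-root of unity, not the converse).

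The idea you are missing is the paper's one-line trick: pick $\epsilon\in\C^*$ with $P_1+\epsilon P_2\neq 0$ and add $\epsilon$ times the second equation to the first, obtaining
\begin{equation*}
\bigl(P_1(m)+\epsilon P_2(m)\bigr)\alpha^m \;=\; Q_1(n)\beta_1^n+\epsilon Q_2(n)\beta_2^n
\qquad\text{for all }(m,n)\in Z .
\end{equation*}
If $\beta_1/\beta_2$ were not a root of unity, the right-hand side would be a \emph{non-degenerate} recurrence with two characteristic roots, while the left-hand side has only one; the two sequences cannot be related (the numbers of roots differ), so by Theorem~\ref{thm:Schmidt reformulation}(a) --- packaged as Proposition~\ref{prop:Schmidt note} with $k=2$ --- the solution set would be finite, contradicting the infinitude of $Z$. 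This disposes of the root-of-unity claim with no valuation-theoretic input at all, after which your degree comparison goes through. I recommend you replace the first half of your argument by this linear-combination step and keep your asymptotic analysis only for the degree identity.
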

\begin{proof}
For every fixed $m$ (respectively $n$), there are
only finitely many $n$ (respectively $m$) such that 
$(m,n)\in Z$. Hence in every infinite subset of $Z$, $m$ and
$n$ must be unbounded.

Fix any $\epsilon\in \C^*$ such that $P_1(x)+\epsilon P_2(x)$ is not the zero polynomial. 
If $(m,n)\in Z$, we have
$(P_1(m)+\epsilon P_2(m))\alpha^m=Q_1(n)\beta_1^n+\epsilon Q_2(n)\beta_2^n$. By Proposition~\ref{prop:Schmidt note}, we have that $\displaystyle\frac{\beta_1}{\beta_2}$ is a root of unity.

For $(m,n)\in Z$, we have $\vert P_1(m)/P_2(m)\vert=\vert Q_1(n)/Q_2(n)\vert$. By taking $(m,n)\in Z$ when both of $m$ and $n$ are large,
we have that $\deg(P_2)=\deg(P_1)$, $\deg(P_2)>\deg(P_1)$, $\deg(P_2)<\deg(P_1)$
respectively
if and only if $\deg(Q_2)=\deg(Q_1)$, $\deg(Q_2)>\deg(Q_1)$, $\deg(Q_2)<\deg(Q_1)$. 
For the rest of the proof, assume that $\deg(P_2)\neq \deg(P_1)$ and $\deg(Q_2)\neq \deg(Q_1)$. We have $\delta:=\displaystyle \frac{\deg(Q_1)-\deg(Q_2)}{\deg(P_1)-\deg(P_2)}>0$ and we need to prove that
$\delta=1$. 

Assume that $\delta>1$. From $\vert P_1(m)/P_2(m)\vert=\vert Q_1(n)/Q_2(n)\vert$ for $(m,n)\in Z$, we have
$C_1n^{\delta}<m<C_2 n^{\delta}$ for some positive constants
$C_1$ and $C_2$; this is expressed succinctly as 
$m=\Theta(n^{\delta})$.  Let $F$ be the field generated
by $\alpha,\beta_1,\beta_2$, and the coefficients of $P_1,P_2,Q_1,Q_2$. By Lemma~\ref{lem:value>1}, we can embed
$F$ into a field $\cF$ which is $\C$ or $\C_p$ together
with its usual absolute value $\vert\cdot\vert_0$
such that $\vert\alpha\vert_0>1$. Since $m=\Theta(n^{\delta})$
and $\delta>1$,
we have
$\vert Q_1(n)\beta_1^n\vert_0=o(\vert P_1(m)\alpha^m\vert_0)$,
contradiction.

The case $\delta<1$ can be dealt with by similar arguments.
We have $n=\Theta(m^{1/\delta})$. We now embed $F$ into
$\cF$ such that 
$\vert\beta_1\vert_0>1$ to obtain 
$\vert P_1(m)\alpha^m\vert_0=o(\vert Q_1(n)\beta_1^n\vert_0)$,
contradiction. This finishes the proof.
\end{proof}


\section{Proof of Theorem~\ref{main result linear algebra}}
\label{sec:linear algebra}
Since we may restrict to a finitely generated subfield
of $K$ over which all the objects in the statement of
Theorem~\ref{main result linear algebra} are defined, and
we may embed this subfield into $\C$, for the rest
of this section, we assume $K$ is a subfield of $\C$.


\subsection{Some reductions}\label{subsec:reduction linear case}
First, we explain why it suffices to prove Theorem~\ref{main result linear algebra} when $r=2$. Suppose that Theorem~\ref{main result linear algebra} is proved for $r=2$, and assume $r\geq 3$. The
set $S'$ of pairs $(m,n)$ satisfying 
$f_{r-1}^m(\bfp_{r-1})=f_r^n(\bfp_r)$
is a finite union of sets of the form
$\{(t_{r-1}k+\ell_{r-1},t_rk+\ell_r):\ k\in\N_0\}$
for some $t_{r-1},t_{r},\ell_{r-1},\ell_r\in\N_0$. Fix one such set
and the corresponding $t_{r-1},t_r,\ell_{r-1},\ell_r$.
 By ignoring finitely
many pairs $(m,n)$ in $S'$, we may assume that $t_{r-1}$ and $t_r$ are positive. We are now looking for
tuples $(n_1,\ldots,n_{r-2},k)\in\N_0^{r-1}$
such that:
$$f_1^{n_1}(\bfp_1)=\ldots=f_{r-2}^{n_{r-2}}(\bfp_{r-2})=(f_{r-1}^{t_{r-1}})^k(f_{r-1}^{\ell_{r-1}}(\bfp_{r-1})).$$ 
The map $f_{r-1}^{t_{r-1}}$
is associated to the matrix $A_{r-1}^{t_{r-1}}$
whose eigenvalues are not root of unity. So we have reduced to $r-1$ maps 
$f_1,\ldots,f_{r-2},f_{r-1}^{t_{r-1}}$
at the starting points $\bfp_1,\ldots,\bfp_{r-2},f_{r-1}^{t_{r-1}}(\bfp_{r-1})$
that satisfy the hypothesis of Theorem~\ref{main result linear algebra}.

We now focus on the case $r=2$. Let $\tilde{\bfx}_1$ be a fixed point
of $f_1$, equivalently $A_1\tilde{\bfx}_1+\bfx_1=\tilde{\bfx}_1$. 
This is possible since $A_1-I_N$ is invertible. Define $\psi(\bfx)=\bfx+\tilde{\bfx}_1$ so that $\psi^{-1}\circ f_1\circ \psi(\bfx)=A_1\bfx$. Hence $f_1^n(\bfx+\tilde{\bfx}_1)=A_1^n\bfx+\tilde{\bfx}_1$. Similarly, let $\tilde{\bfx}_2$
be a fixed point of $f_2$, then we have
$f_2^n(\bfx+\tilde{\bfx}_2)=A_2^n\bfx+\tilde{\bfx}_2$.
Therefore we reduce to the problem of studying the set of pairs
$(n_1,n_2)\in\N_0^2$ satisfying $A_1^{n_1} \bfu=A_2^{n_2}\bfv+\bfw$
where $\bfu$, $\bfv$, and $\bfw$ are given vectors
such that $\bfu$ (respectively $\bfv$) is not preperiodic
under the map $\bfx\mapsto A_1\bfx$ (respectively 
$\bfx\mapsto A_2\bfx$).

Let $P$ and $Q$ be in $\GL_N(K)$ such that 
$A_1=P^{-1}J_1P$ and $A_2=Q^{-1}J_2Q$ where
$J_1$ and $J_2$ are respectively the Jordan form of $A_1$
and $A_2$. The equation $A_1^{n_1}\bfu=A_2^{n_2}\bfv+\bfw$
is equivalent to 
$J_1^{n_1} P\bfu=PQ^{-1}J_2^{n_2}Q\bfv+P\bfw$. Replacing $(\bfu,\bfv,\bfw)$ by $(P\bfu,Q\bfv,P\bfw)$, we reduce to proving the following
(after a slight change of notation):

\begin{theorem}\label{thm:to Jordan form}
Let $A,B\in M_N(K)$ be in Jordan form and
let $C\in \GL_N(K)$. Let $\bfu,\bfv,\bfw\in K^N$ such that
$\bfu$ and $\bfv$ are respectively not preperiodic 
under the maps $\bfx\mapsto A\bfx$ and $\bfx\mapsto B\bfx$.
If neither $A$ nor $B$ have an eigenvalue which is a root
of unity, then the set
$S:=\{(m,n)\in\N_0^2:\ A^m\bfu=CB^n\bfv+\bfw\}$
is a finite union of sets of the form 
$\{(m_0k+\ell_1,n_0k+\ell_2):\ k\in\N_0\}$
for some $m_0,n_0,\ell_1,\ell_2\in \N_0$.
\end{theorem}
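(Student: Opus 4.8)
The plan is to reduce the matrix equation $A^m\bfu = CB^n\bfv + \bfw$, entry by entry, to a finite system of scalar equations of the type $P(m)\alpha^m = Q(n)\beta^n$ (plus possibly an inhomogeneous term), to which Proposition~\ref{prop:Schmidt note} and Proposition~\ref{prop:deg difference} apply. Since $A$ and $B$ are in Jordan form, each coordinate of $A^m\bfu$ is a $K$-linear combination of terms $\binom{m}{j}\alpha^{m-j}$ where $\alpha$ runs over the eigenvalues of $A$; collecting these, the $i$-th coordinate of $A^m\bfu$ is a finite sum $\sum_\alpha P_{i,\alpha}(m)\alpha^m$ with $P_{i,\alpha}\in K[x]$, and similarly the $i$-th coordinate of $CB^n\bfv$ is $\sum_\beta R_{i,\beta}(n)\beta^n$, while $\bfw$ contributes a constant $w_i$ (which I may think of as $w_i\cdot 1^n$). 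So $S$ is exactly the set of $(m,n)\in\N_0^2$ satisfying, for every $i\in\{1,\dots,N\}$,
\[
\sum_\alpha P_{i,\alpha}(m)\,\alpha^m \;=\; w_i \;+\; \sum_\beta R_{i,\beta}(n)\,\beta^n .
\]
Here the $\alpha$'s are distinct non-roots-of-unity (eigenvalues of $A$), hence $(\dots,\alpha,\dots)$ is a non-degenerate tuple, and likewise for the $\beta$'s; the constant $1$ is handled by the Schmidt convention for $\alpha_0=1$.

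\textbf{Step 1: dispose of the generic case via Proposition~\ref{prop:Schmidt note}.} Fix one coordinate $i$ for which the left side is genuinely present (some $P_{i,\alpha}\ne 0$; such an $i$ exists unless $A^m\bfu$ is eventually constant, but then $\bfu$ would be $A$-preperiodic, contradicting the hypothesis — $A$ having no root-of-unity eigenvalue forces a non-preperiodic vector to have an unbounded, non-eventually-constant orbit). Apply Proposition~\ref{prop:Schmidt note} to the $i$-th equation viewed as $u_m = c + \sum_j Q_j(n)b_j^n$: if the right side has at least two distinct $b_j$'s among $\{1\}\cup\{\beta\text{'s}\}$, or if its constant term is nonzero, then $Z$ (hence $S$) is finite and we are done. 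The remaining, hard case is therefore: \emph{for every} coordinate $i$, the right-hand side collapses to a single term $R_i(n)\beta_i^n$ with $\beta_i$ a single eigenvalue of $B$ (possibly with $w_i=0$ forced), and symmetrically, swapping the roles of $A$ and $B$, each left-hand side is a single term $P_i(m)\alpha_i^m$. After discarding the coordinates that are identically zero on both sides, we are left with a nonempty set of coordinates on which
\[
P_i(m)\,\alpha_i^m \;=\; Q_i(n)\,\beta_i^n \qquad (i\in I),
\]
with each $\alpha_i,\beta_i$ not a root of unity and each $P_i,Q_i$ nonzero. (One must double-check that $w_i=0$ in this surviving case, which follows by applying part~(b) of Theorem~\ref{thm:Schmidt reformulation} to a coordinate where the orbit is non-eventually-constant, as above.)

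\textbf{Step 2: pin down the progression structure in the collapsed case.} If $|I|\ge 2$, pick two coordinates $i,j\in I$ and apply Proposition~\ref{prop:deg difference}: if $S$ is infinite then $\beta_i/\beta_j$ is a root of unity and $\deg P_j - \deg P_i = \deg Q_j - \deg Q_i$. Running over all pairs shows all the $\beta_i$ ($i\in I$) agree up to roots of unity, and symmetrically all the $\alpha_i$ agree up to roots of unity; so after passing to a sub-progression in $m$ and in $n$ (replacing $m$ by $m_0k+\ell_1$, $n$ by $n_0k+\ell_2$ to kill the roots of unity — permissible since the conclusion is stated up to a finite union of such progressions) we may assume a single $\alpha$ and a single $\beta$. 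Now fix $i_0\in I$: the equation $P_{i_0}(m)\alpha^m = Q_{i_0}(n)\beta^n$ together with $|P_{i_0}(m)/P_{i_0}(n)|$-type comparisons (embed the relevant finitely generated field into $\C$ or some $\C_p$ with $|\alpha|_0>1$, à la Lemma~\ref{lem:value>1}) forces a linear relation: taking $|\cdot|_0$-valuations, $(\log|\alpha|_0)\,m + O(\log m) = (\log|\beta|_0)\,n + O(\log n)$, so $m$ and $n$ grow proportionally along $S$; combined with the algebraic relation $P_{i_0}(m)\alpha^m = Q_{i_0}(n)\beta^n$ (a polynomial-exponential equation in the single effective variable once $m,n$ are tied together) one extracts that $S$, minus a finite set, lies on finitely many lines $n_0 m = m_0 n + \text{const}$, i.e.\ on finitely many arithmetic progressions $\{(m_0k+\ell_1,n_0k+\ell_2)\}$. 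Conversely any such progression that is contained in $S$ is of the required form, and there can be only finitely many of them (two distinct progressions meet in $\le 1$ point unless parallel, and parallel ones with the same direction but contained in $S$ would force a contradiction on the growth rates again, or can be merged). Hence $S$ is a finite union of sets of the desired shape.

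\textbf{The main obstacle} I expect is Step~2, specifically making rigorous the passage from "each surviving coordinate satisfies a single monomial-exponential identity $P_i(m)\alpha_i^m=Q_i(n)\beta_i^n$ and $m,n$ grow proportionally" to "$S$ is a finite union of rank-$\le 1$ progressions". The archimedean/$p$-adic absolute-value comparison (Lemma~\ref{lem:value>1}) gives $m/n \to m_0/n_0$, but upgrading this asymptotic to an \emph{exact} linear relation $n_0 m - m_0 n = \text{const}$ along $S$ requires genuinely using the algebraicity of the relation $P_i(m)\alpha^m = Q_i(n)\beta^n$ — roughly, that two multiplicatively independent-looking quantities can agree infinitely often only along such a lattice — and then arguing that only finitely many such lattice-cosets can occur. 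Care is also needed at the very start to handle degenerate coordinates (where one or both sides are identically zero, forcing linear constraints $w_i=0$ or $P_i=0$) and to confirm that a non-preperiodic vector under a matrix with no root-of-unity eigenvalue indeed yields a coordinate whose orbit is not eventually constant — which is what lets us invoke the finiteness half of the results rather than their degenerate alternatives.
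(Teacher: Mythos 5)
Your overall strategy (coordinate-wise reduction to polynomial--exponential equations, then Propositions~\ref{prop:Schmidt note} and \ref{prop:deg difference}) is the same as the paper's, but the proposal has a genuine gap exactly where you flag ``the main obstacle,'' and the paper's way past it is a concrete device you are missing. After the collapse to relations $P_i(m)\alpha_i^m=Q_i(n)\beta_i^n$, you try to extract the arithmetic-progression structure from absolute-value growth comparisons, which only give an asymptotic proportionality $m/n\to m_0/n_0$; as you admit, this does not yield the exact relation $n_0m-m_0n=\mathrm{const}$. The paper's resolution uses the Jordan-block degree bookkeeping: within the block of the eigenvalue $\alpha_{\tilde i}$ whose leading polynomial has maximal degree $d_{\tilde i}$, the entry in position $d_{\tilde i}+1$ has polynomial coefficient of degree exactly $0$, i.e.\ a \emph{nonzero constant}. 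Matching that coordinate against the other side (Proposition~\ref{prop:compare 2 rows}) and then using Proposition~\ref{prop:deg difference} twice (Proposition~\ref{prop:dtildei = e*}) shows the matching polynomial is also a nonzero constant, so the relation on that coordinate reads $c_1\alpha_{\tilde i}^m=c_2\beta_{j^*}^n$ for all large $(m,n)\in S$. Since neither base is a root of unity, this \emph{exact} multiplicative identity immediately confines $S_{\ge N}$ to a single progression $\{(m_0k+\ell_1,n_0k+\ell_2)\}$. Without producing a constant-coefficient coordinate, your argument cannot close.

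Two further points. First, your assertion in Step 1 that the distinct non-root-of-unity eigenvalues of $A$ automatically form a non-degenerate tuple is false ($\alpha$ and $-\alpha$ are distinct and neither is a root of unity, yet their ratio is); Proposition~\ref{prop:Schmidt note} needs non-degeneracy, so you must first replace $A,B$ by $A^M,B^M$ and split $S$ into residue classes mod $M$, as the paper does at the outset. Second, containment of $S$ in finitely many progressions is not the conclusion: you must still show that \emph{within} each progression the solution set is a finite union of arithmetic progressions in the parameter $k$; the paper does this by applying the Skolem--Mahler--Lech statement (Lemma~\ref{lem:easy ML}) to the block map $L(\bfx,\bfy)=(A^{m_0}\bfx,B^{n_0}\bfy)$ on $K^{2N}$ and the subvariety $\bfx=C\bfy+\bfw$. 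Your proposal elides this last step.
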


\subsection{Proof of Theorem~\ref{thm:to Jordan form}}
From now on, we assume the notation of Theorem~\ref{thm:to Jordan form}. 
We start with the following easy result:
\begin{lemma}\label{lem:easy ML}
Let $P\in M_N(K)$, $\bfp\in K^N$, and $V$ a closed subvariety
of $\bA^N_K$. The set $\{n\in\N_0:\ P^n\bfv\in V(K)\}$
is a finite union of arithmetic progressions.
\end{lemma}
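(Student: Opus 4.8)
The plan is to prove Lemma~\ref{lem:easy ML} as a direct consequence of the Skolem--Mahler--Lech theorem (Theorem~\ref{thm:SML}), reducing a geometric statement about orbits of a linear map hitting a subvariety to the vanishing of finitely many linear recurrence sequences. First I would reduce to the case where $V$ is a hypersurface: writing $V = V(g_1,\dots,g_s)$ with $g_j \in K[x_1,\dots,x_N]$, the set in question is $\bigcap_{j=1}^s \{n \in \N_0 : g_j(P^n\bfv) = 0\}$, and a finite intersection of finite unions of arithmetic progressions is again a finite union of arithmetic progressions. So it suffices to treat a single polynomial $g$.

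The key observation is that, for a fixed matrix $P$ and vector $\bfv$, each coordinate of $P^n\bfv$ is a linear recurrence sequence in $n$ (with characteristic polynomial dividing that of $P$). Then, for any polynomial $g \in K[x_1,\dots,x_N]$, the sequence $u_n := g(P^n\bfv)$ is a polynomial expression in finitely many linear recurrence sequences, hence is itself a linear recurrence sequence: this is because the set of linear recurrence sequences over a field forms a ring under termwise addition and multiplication (the product sequence has characteristic roots among the pairwise products of the original characteristic roots, and similarly for sums). Therefore $\{n \in \N_0 : P^n\bfv \in V(K)\} = \{n : u_n = 0\}$ is the zero set of a linear recurrence sequence, which by Theorem~\ref{thm:SML} is a finite union of arithmetic progressions.

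I would want to record the auxiliary fact that $n \mapsto (P^n\bfv)_i$ is a linear recurrence sequence explicitly: if the characteristic polynomial of $P$ is $t^N - c_{N-1}t^{N-1} - \cdots - c_0$, then by Cayley--Hamilton $P^{n+N} = c_{N-1}P^{n+N-1} + \cdots + c_0 P^n$, so each entry of $P^n$ — and hence each coordinate of $P^n\bfv$ — satisfies this same linear recurrence. Combined with the closure of linear recurrence sequences under the ring operations (which one can cite or prove in two lines via the companion-matrix / tensor-sum-of-roots description), this gives that $u_n = g(P^n\bfv)$ is a linear recurrence sequence, completing the argument.

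There is essentially no serious obstacle here; the statement is genuinely ``easy'' as the paper advertises. The only point requiring a small amount of care is justifying that linear recurrence sequences are closed under multiplication (so that plugging linear recurrences into a polynomial again yields a linear recurrence), but this is standard. I would simply invoke it, perhaps with a one-line indication that the characteristic roots of a product (resp. sum) of linear recurrence sequences lie among the pairwise products (resp. the union of the roots) of the characteristic roots of the factors.
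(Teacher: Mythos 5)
Your proposal is correct and follows exactly the same route as the paper: express $V$ by finitely many polynomials, observe that each $g_j(P^n\bfv)$ is a linear recurrence sequence (the paper leaves this as an assertion, which you justify via Cayley--Hamilton and the ring structure on linear recurrence sequences), and apply the Skolem--Mahler--Lech theorem. The paper also notes the result follows alternatively from \cite[Theorem~1.3]{Bel06}, but no further content is needed.
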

\begin{proof}
Let $f_1,\ldots,f_k$ be polynomials defining $V$. Then each
$\{f_i(P^n\bfv):\ n\in\N_0\}$ is a linear recurrence sequence and
we can apply Theorem~\ref{thm:SML}. One can also get this result as an immediate consequence of \cite[Theorem~1.3]{Bel06}.
\end{proof}

We may assume that the tuple of non-zero eigenvalues
of $A$ and the tuple of non-zero eigenvalues of $B$ are non-degenerate. This is possible since we can replace the data
$(A,B,C,\bfu,\bfv,\bfw)$ by 
$(A^M,B^M,C,A^{r_1}\bfu,B^{r_2}\bfv,\bfw)$
for some $M\in \N$ and for all $0\leq r_1,r_2\leq M-1$ and
establish the conclusion of Theorem~\ref{thm:to Jordan form}
for the set of pairs $(m,n)\in S$ satisfying
$m\equiv r_1\bmod M$
and $n\equiv r_2\bmod M$.

For $\lambda\in\C$ and $s\in\N$, let $J_{\lambda,s}$ be the Jordan matrix of size $s$ and eigenvalue
$\lambda$. We have the formula:

$$J_{\lambda,s}^n=\begin{bmatrix}
\lambda^n & \binom{n}{1}\lambda^{n-1} & \binom{n}{2}\lambda^{n-2} & \dots &\binom{n}{s-1}\lambda^{n-s+1}\\
0 &  \lambda^n & \binom{n}{1}\lambda^{n-1} & \dots 
& \binom{n}{s-2}\lambda^{n-s+2}\\
\vdots &\vdots &\vdots & \vdots & \vdots\\
\vdots &\vdots &\vdots & \vdots & \vdots\\
0 & 0 & 0 & \dots & \lambda^n
\end{bmatrix}
$$

\emph{For convenience, we follow the convention that assigns
any negative number to be a degree of the zero polynomial.} 

The
key observation is that if $\lambda\ne 0$, then there are polynomials $P_{i,j}$
of degree $j-i$ for $1\leq i,j\leq s$ such that the $(i,j)$-entry
of $J_{\lambda,s}^n$ is $P_{i,j}(n)\lambda^n$ for every $n\in \N$. If $\lambda=0$ \emph{and $n\geq s$}, then $J_{\lambda,s}^n=0_{s,s}$ (the zero matrix) so that we can still express the $(i,j)$-th entry of $J_{\lambda,s}^n$ as $\lambda^n\cdot P_{i,j}(n)$ where $P_{i,j}$ is any chosen polynomial of degree $j-i$.

So, if $\bfa=(a_1,\ldots,a_s)^T$ is a fixed column vector in $K^s$ and $n\geq s$, then there are 
polynomials $P_1,\ldots,P_s$ such that:
$$J_{\lambda,s}^n\bfa=(P_1(n)\lambda^n,\ldots,P_s(n)\lambda^n)^T$$
for every $n\in\N$. Moreover, if $\bfa\neq \mathbf{0}$ and $d:=\max\{j:\ a_j\neq 0\}$
then by a direct calculation, we have $\deg(P_1)=d-1\leq s-1$, 
$\deg(P_2)=d-2,\ldots,\deg(P_s)=d-s\leq 0$.

We assume that the set $S$ is infinite; otherwise there is nothing to prove. Since $\bfu$ and $\bfv$ are not preperiodic,
for every fixed $m$ (respectively $n$), there 
is at most one value of $n$ (respectively $m$) such that
$(m,n)\in S$. Hence $m$ and $n$ must be unbounded in every
infinite subset of $S$. Hence it suffices to prove that the set 
$$S_{\geq N}:=\{(m,n)\in S\colon m,n\ge N\}$$ 
is a finite union of cosets of subsemigroups of $\N_0\times \N_0$ of rank at most equal to $1$. 

Let $p$ be the number of Jordan blocks in $A$, let $J_{\alpha_i,m_i}$ for $1\leq i\leq p$, $\alpha_i\in \C$,
$m_i\in \N$, and $\sum_{i} m_i=N$ 
be the Jordan blocks of
$A$. Let $q$ be the number of Jordan blocks in $A$, let $J_{\beta_j,n_j}$ for $1\leq j\leq q$, $\beta_j\in \C$,
$n_j\in \N$, and $\sum_{j} n_j=N$ 
be the Jordan blocks of
$B$. Note that the $\alpha_i$'s and $\beta_j$'s are not
root of unity.
By a previous observation, there exist polynomials
$P_{i,k}$ for $1\leq i\leq p$ and $1\leq k\leq m_i$
such that for $m\ge N$, we have
\begin{align}\label{eq:formula Amu}
\begin{split}
A^m\bfu=&(P_{1,1}(m)\alpha_1^m,\ldots,P_{1,m_1}(m)\alpha_1^m,P_{2,1}(m)\alpha_2^m,
\ldots,P_{2,m_2}(m)\alpha_2^m,\\
&\ldots,P_{p,1}(m)\alpha_p^m,\ldots,P_{p,m_p}(m)\alpha_p^m)^T.
\end{split}
\end{align}
Moreover, for $1\leq i\leq p$, let $d_i=\deg(P_{i,1})$, then we have
$d_i\leq m_i-1$ and $\deg(P_{i,k})=d_i-k+1$ for $1\leq k\leq m_i$. Similarly, there exist polynomials
$Q_{j,\ell}$ for $1\leq j\leq q$ and $1\leq \ell\leq n_j$
with $e_j:=\deg(Q_{j,1})\leq n_j-1$, $\deg(Q_{j,\ell})=e_j-\ell+1$ such that for $n\ge N$, we have 
\begin{align}\label{eq:formula Bnv}
\begin{split}
B^n\bfv=&(Q_{1,1}(n)\beta_1^n,\ldots,Q_{1,n_1}(n)\beta_1^n,Q_{2,1}(n)\beta_2^n,\ldots,Q_{2,n_2}(n)\beta_2^n,\\
&\ldots,Q_{q,1}(n)\beta_q^n,\ldots,Q_{q,n_q}(n)\beta_q^n)^T.
\end{split}
\end{align}

Since $\bfu$ is not preperiodic under the map $\bfx\mapsto A\bfx$, there is at most one value of $m$ such that $A^m\bfu$ is zero. Hence the set $\cI:=\{i:\ \alpha_i\neq 0\ \text{and\ }d_i\geq 0\}$
is non-empty. Similarly,
the set $\cJ:=\{j:\ \beta_j\neq 0\ \text{and\ }e_j\geq 0\}$ is non-empty. We have the following result.

\begin{proposition}\label{prop:compare 2 rows}
The following hold:
\begin{itemize}
\item [(a)] Let $i\in\{1,\ldots,p\}$ and $k\in\{1,\ldots,m_i\}$
be such that $\alpha_i\neq 0$
and $\deg(P_{i,k})\geq 0$. Then there exist
$j^*\in \{1,\ldots,q\}$,
$\ell^*\in \{1,\ldots,n_{j^*}\}$,
and a polynomial $Q(x)$ such that
$\beta_{j^*}\neq 0$, $\deg(Q)=\deg(Q_{j^*,\ell^*})\geq 0$,
and $P_{i,k}(m)\alpha_i^m=Q(n)\beta_{j^*}^n$
for every $(m,n)\in S_{\geq N}$.
\item [(b)] Let $j\in\{1,\ldots,q\}$ and $\ell\in\{1,\ldots,n_j\}$
be such that $\beta_j\neq 0$
and $\deg(Q_{j,\ell})\geq 0$. Then there exist
$i^*\in \{1,\ldots,p\}$,
$k^*\in \{1,\ldots,m_{i^*}\}$,
and a polynomial $P(x)$ such that
$\alpha_{i^*}\neq 0$, $\deg(P)=\deg(P_{i^*,k^*})\geq 0$,
and $P(m)\alpha_{i^*}^m=Q_{j,\ell}(n)\beta_{j}^n$
for every $(m,n)\in S_{\geq N}$.
\end{itemize}
\end{proposition}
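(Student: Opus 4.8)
The plan is to prove both parts by isolating a single coordinate of the vector identity $A^m\bfu = CB^n\bfv+\bfw$ and then invoking Proposition~\ref{prop:Schmidt note} to force the other side of that coordinate equation to collapse onto a single exponential term. I will carry out the argument for part (a); part (b) is obtained by the mirror-image argument after rewriting the identity as $B^n\bfv = C^{-1}A^m\bfu - C^{-1}\bfw$ and interchanging the roles of the two sides.

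Index the $N$ coordinates of $\bA^N_K$ by pairs $(i,k)$ and $(j,\ell)$ following the Jordan block decompositions of $A$ and $B$, so that by \eqref{eq:formula Amu}, \eqref{eq:formula Bnv} the $(i,k)$-coordinate of $A^m\bfu$ is $P_{i,k}(m)\alpha_i^m$ for $m\ge N$ and the $(j,\ell)$-coordinate of $B^n\bfv$ is $Q_{j,\ell}(n)\beta_j^n$ for $n\ge N$. Writing $c_{(i,k),(j,\ell)}$ for the entries of $C$ and $w_{i,k}$ for those of $\bfw$, and noting that the blocks of $B$ with eigenvalue $0$ contribute nothing once $n\ge N$, the $(i,k)$-coordinate of the identity reads, for all $(m,n)\in S_{\ge N}$,
$$P_{i,k}(m)\alpha_i^m = w_{i,k}+\sum_{j:\ \beta_j\ne 0}R_j(n)\beta_j^n,\qquad R_j(x):=\sum_{\ell}c_{(i,k),(j,\ell)}Q_{j,\ell}(x).$$
Keep only the indices $j$ for which $R_j\ne 0$, say $j_1,\dots,j_{k'}$; the tuple $(\beta_{j_1},\dots,\beta_{j_{k'}})$ is non-degenerate by our standing reduction, none of $\alpha_i,\beta_{j_1},\dots,\beta_{j_{k'}}$ is a root of unity, and $P_{i,k}\ne 0$ because $\deg P_{i,k}\ge 0$. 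If $k'\ge 1$, then since $S_{\ge N}$ is infinite, Proposition~\ref{prop:Schmidt note} forces $k'=1$ and $w_{i,k}=0$. If instead $k'=0$, then $P_{i,k}(m)\alpha_i^m=w_{i,k}$ on $S_{\ge N}$, which is impossible because $m$ is unbounded on $S_{\ge N}$ while $\alpha_i$ is not a root of unity (apply Theorem~\ref{thm:SML} to $m\mapsto P_{i,k}(m)\alpha_i^m-w_{i,k}$, or a direct estimate using Lemma~\ref{lem:value>1}). Hence $k'=1$; setting $j^*:=j_1$ and $Q:=R_{j^*}$, we get $\beta_{j^*}\ne 0$, $Q\ne 0$, and $P_{i,k}(m)\alpha_i^m=Q(n)\beta_{j^*}^n$ for every $(m,n)\in S_{\ge N}$.

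It remains to pin down $\deg Q$. Since $\deg Q_{j^*,\ell}=e_{j^*}-\ell+1$, the nonzero polynomials among $Q_{j^*,1},\dots,Q_{j^*,n_{j^*}}$ have pairwise distinct degrees, hence are linearly independent; therefore $\deg Q=\deg R_{j^*}=\deg Q_{j^*,\ell^*}\ge 0$, where $\ell^*$ is the least index with $c_{(i,k),(j^*,\ell^*)}\ne 0$ and $\deg Q_{j^*,\ell^*}\ge 0$. This is exactly the assertion of (a). Part (b) follows verbatim after the rewriting mentioned above; there the collapsed term is automatically the genuine coordinate entry $Q_{j,\ell}(n)\beta_j^n$, and the polynomial $P$ produced on the other side is a linear combination of $P_{i^*,1},\dots,P_{i^*,m_{i^*}}$, whose degree equals $\deg P_{i^*,k^*}$ for the least $k^*$ with nonzero coefficient and nonnegative $\deg P_{i^*,k^*}$.

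I do not anticipate a genuine obstacle: the real force is already packaged in Proposition~\ref{prop:Schmidt note} (hence ultimately in the Subspace Theorem via Laurent's theorem). The points requiring care are (i) correctly tracking coordinates through the change-of-basis matrix $C$, (ii) isolating the degenerate subcase $k'=0$ and disposing of it by Skolem--Mahler--Lech, and (iii) using the distinctness of the within-block polynomial degrees to identify $\deg Q$ (respectively $\deg P$) with one of the $\deg Q_{j^*,\ell}$ (respectively $\deg P_{i^*,k}$).
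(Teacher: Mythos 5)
Your proof is correct and follows essentially the same route as the paper: isolate the relevant coordinate of $A^m\bfu=CB^n\bfv+\bfw$, apply Proposition~\ref{prop:Schmidt note} to conclude that the constant term vanishes and exactly one exponential term survives, and identify the degree of the surviving combination via the strictly decreasing degrees $\deg(Q_{j^*,\ell})=e_{j^*}-\ell+1$ within a Jordan block. Your explicit disposal of the degenerate case $k'=0$ (via Skolem--Mahler--Lech) is a minor point the paper leaves implicit, but it does not change the argument.
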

\begin{proof}
For part (a), fix $i\in\{1,\ldots,p\}$ and
$k\in\{1,\ldots,m_i\}$ such that 
$\alpha_i\neq 0$ and $\deg(P_{i,k})\geq 0$. 
Write $\mu=m_1+\ldots+m_{i-1}+1$
so that
$P_{i,1}(m)\alpha_{i}^m$
is the $\mu$-th entry of $A^m\bfu$. Write
$\bfw=(w_1,\ldots,w_N)^T$ and express
the $\mu$-th row of the matrix $C$ as:
$$(c_{1,1},\ldots,c_{1,n_1},\ldots,c_{q,1},\ldots,c_{q,n_q})^T.$$
For $(m,n)\in S_{\geq N}$, from $A^m\bfu=CB^n\bfv+\bfw$, \eqref{eq:formula Amu}, and \eqref{eq:formula Bnv}, we
have:
\begin{equation}\label{eq:compare mu} 
P_{i,1}(m)\alpha_{i}^m - w_{\mu}=
\sum_{j=1}^{q}\left(\sum_{\ell=1}^{n_j} c_{j,\ell}Q_{j,\ell}(n)\right)\beta_j^n=\sum_{j=1}^q Q_j(n)\beta_j^n
\end{equation}
where $Q_j(x):=\displaystyle\sum_{\ell=1}^{n_j}c_{j,\ell}Q_{j,\ell}(x)$.

Recall our assumption that the non-zero elements in $\{\beta_1,\ldots,\beta_q\}$ form a non-degenerate tuple. By
Proposition~\ref{prop:Schmidt note}, $w_{\mu}=0$ 
and there is a unique
$j^*\in\{1,\ldots,q\}$
such that 
$\beta_{j^*}\neq 0$,
$Q(x):=Q_{j^*}(x)\neq 0$,
and for $j\in \{1,\ldots,q\}\setminus\{j^*\}$,
we have $Q_j(n)\beta_j^n\equiv 0$ (this means
either $\beta_j=0$ or $Q_j(x)$ is the zero polynomial)
so that equation \eqref{eq:compare mu} becomes:
\begin{equation}\label{eq:compare mu becomes}
P_{i,1}(m)\alpha_{i}^m=Q(n)\beta_{j^*}^n.
\end{equation}
Since
$Q=\displaystyle\sum_{\ell=1}^{n_{j^*}} c_{j^*,\ell}Q_{j^*,\ell}$,
if we let $\ell^*$
be minimal such that $c_{j^*,\ell^*}\neq 0$ then
$\deg(Q_{j^*,\ell^*})=\deg(Q)\geq 0$. This finishes the 
proof of part (a).

The proof of part (b) is completely similar, this time
we consider the equation
$B^n\bfv=C^{-1}A^m\bfu-C^{-1}\bfw$
and compare the rows corresponding to the entry
$Q_{j,\ell}(n)\beta_j^n$ in $B^n\bfv$. 
\end{proof}

Let $\tilde{i}\in \cI$ be such that 
$d_{\tilde{i}}=\max\{d_i:\ i\in \cI\}$. By part (a) of Proposition~\ref{prop:compare 2 rows}, there exist
$j^*$, $\ell^*$, and a polynomial $Q(x)$ such that 
$e^*:=\deg(Q)=\deg(Q_{j^*,\ell^*})\geq 0$
and
\begin{equation}\label{eq:tilde i and j*}
P_{\tilde{i},1}(m)\alpha_{\tilde{i}}^m=Q(n)\beta_{j^*}^n
\end{equation}
for every $(m,n)\in S_{\geq N}$. We have:
\begin{proposition}\label{prop:dtildei = e*}
The following hold:
\begin{itemize}
\item [(a)] $d_{\tilde{i}}=e^*$, in other words $\deg(P_{\tilde{i},1})=\deg(Q)$.
\item [(b)] There exist $\omega\in K^*$ such that
$\alpha_{\tilde{i}}^m=\omega\beta_{j^*}^n$
for every $(m,n)\in S_{\geq N}$.
\end{itemize}
\end{proposition}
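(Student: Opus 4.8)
The plan is to obtain part~(a) by a degree count running along the Jordan chains of $A$ and $B$, and then part~(b) by combining that degree equality with a valuation estimate forcing $m$ and $n$ to grow proportionally. Assume throughout that $S_{\geq N}$ is infinite, so $m$ and $n$ are unbounded on it and each determines the other up to finitely many values. A preliminary step is to record that $\alpha_{\tilde{i}}$ and $\beta_{j^*}$ are multiplicatively dependent and that $m=\Theta(n)$ on $S_{\geq N}$: reading the two sides of \eqref{eq:tilde i and j*} as values of non-degenerate linear recurrence sequences in $m$ and in $n$ (neither of the form $R(\cdot)\zeta^{(\cdot)}$ with $\zeta$ a root of unity, since $\alpha_{\tilde{i}},\beta_{j^*}$ are not roots of unity), Theorem~\ref{thm:Schmidt reformulation}(a) shows these sequences are related, i.e. $\alpha_{\tilde{i}}^{a}=\beta_{j^*}^{b}$ for some nonzero integers $a,b$; applying Lemma~\ref{lem:value>1} at a place where $|\alpha_{\tilde{i}}|_0>1$ (so $|\beta_{j^*}|_0=|\alpha_{\tilde{i}}|_0^{a/b}$) and comparing the growth of the two sides of \eqref{eq:tilde i and j*} rules out $a/b<0$ (the left side would dominate) and yields $m/n\to a/b=:\lambda>0$.

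For part~(a), first suppose $e^*\geq 1$. Applying Proposition~\ref{prop:compare 2 rows}(b) to the rows $(j^*,1),(j^*,2),\dots,(j^*,e_{j^*}+1)$ of the $j^*$-th Jordan block of $B$ (all carry a polynomial $Q_{j^*,\ell}$ of nonnegative degree) produces identities $P^{(\ell)}(m)\alpha_{i_\ell}^{m}=Q_{j^*,\ell}(n)\beta_{j^*}^{n}$ on $S_{\geq N}$ with $\deg P^{(\ell)}=\deg P_{i_\ell,k_\ell}\geq 0$. Comparing consecutive identities via Proposition~\ref{prop:deg difference} (with $\beta_{j^*}$ in the role of the common base): $\alpha_{i_\ell}/\alpha_{i_{\ell+1}}$ is a root of unity, so by the non-degeneracy of the eigenvalue tuple of $A$ we get $i_\ell=i_{\ell+1}$, and the accompanying degree identity gives $\deg P^{(\ell+1)}=\deg P^{(\ell)}-1$; running this down to $\ell=e_{j^*}+1$ forces $\deg P^{(1)}\geq e_{j^*}\geq e^*$, while $i_1\in\cI$ and the maximality of $d_{\tilde{i}}$ give $\deg P^{(1)}\leq d_{i_1}\leq d_{\tilde{i}}$, hence $d_{\tilde{i}}\geq e^*$. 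Symmetrically, if $d_{\tilde{i}}\geq 1$, running the analogous argument up the $\tilde{i}$-th Jordan block of $A$ (using Proposition~\ref{prop:compare 2 rows}(a), starting from \eqref{eq:tilde i and j*}, and Proposition~\ref{prop:deg difference} with the non-degeneracy of $B$'s eigenvalue tuple keeping the index equal to $j^*$) produces associated polynomials of degrees $e^*,e^*-1,\dots,e^*-d_{\tilde{i}}$, all nonnegative, so $e^*\geq d_{\tilde{i}}$. These cases are exhaustive — if $d_{\tilde{i}}=0$ then $e^*=0$ too, as otherwise the first argument gives $d_{\tilde{i}}\geq e^*\geq 1$ — so $d_{\tilde{i}}=e^*$.

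For part~(b), use $\alpha_{\tilde{i}}^{a}=\beta_{j^*}^{b}$ (with $a,b>0$) to rewrite \eqref{eq:tilde i and j*}: writing $m=aq+\rho$ with $0\leq\rho<a$, it becomes $\beta_{j^*}^{\,n-bq}=\alpha_{\tilde{i}}^{\rho}\,P_{\tilde{i},1}(m)/Q(n)$. Partition $S_{\geq N}$ into finitely many pieces by the value of $\rho$ — and, if the place chosen next is non-archimedean, also by the residues of $m$ and of $n$ modulo a large enough power of $p$, so that $v_p(P_{\tilde{i},1}(m))$ and $v_p(Q(n))$ are constant on each piece. On such a piece, evaluating the displayed identity at a place where $|\beta_{j^*}|_0>1$ (Lemma~\ref{lem:value>1}) and using $\deg P_{\tilde{i},1}=\deg Q$ from part~(a) together with $m=\Theta(n)$, the factor $P_{\tilde{i},1}(m)/Q(n)$ stays bounded above and below, so $n-bq$ is bounded and — after one more finite subdivision — constant; then $\beta_{j^*}^{\,n-bq}$ is constant on the piece, so $\alpha_{\tilde{i}}^{m}=\omega_c\beta_{j^*}^{n}$ there for some constant $\omega_c$. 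Finally, dividing \eqref{eq:tilde i and j*} gives $\alpha_{\tilde{i}}^{m}\beta_{j^*}^{-n}=Q(n)/P_{\tilde{i},1}(m)$ on all of $S_{\geq N}$, and by part~(a) and $m/n\to\lambda$ the right side converges, as $m,n\to\infty$ along $S_{\geq N}$, to the single value $(\operatorname{lead}Q)/((\operatorname{lead}P_{\tilde{i},1})\lambda^{d_{\tilde{i}}})$; hence every $\omega_c$ equals this value, which is the desired uniform $\omega$.

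I expect the main obstacle to be part~(a): threading Propositions~\ref{prop:compare 2 rows} and~\ref{prop:deg difference} up the Jordan chains while keeping the block indices $i_\ell$ and $j^*$ from drifting is precisely the point where repeated eigenvalues (non-diagonalizable $A,B$) demand care, and it is what makes this argument more delicate than the unit-equation reduction available in the diagonalizable case. A secondary nuisance is that $\alpha_{\tilde{i}}$ or $\beta_{j^*}$ may have all archimedean absolute values $\leq 1$, which forces the $p$-adic versions of the estimates above, together with the preliminary passage to residue classes modulo a high power of $p$ in order to tame the valuations of the polynomial values $P_{\tilde{i},1}(m)$ and $Q(n)$.
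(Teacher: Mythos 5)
Your part (a) is correct and is essentially the paper's argument: both directions come from feeding the degree-$0$ entries of the relevant Jordan blocks into Proposition~\ref{prop:compare 2 rows} and then invoking Proposition~\ref{prop:deg difference} together with non-degeneracy of the eigenvalue tuples to keep the block indices from drifting. You walk the whole chain entry by entry, whereas the paper jumps directly to the entries $P_{\tilde{i},d_{\tilde{i}}+1}$ (degree $0$) and $Q_{j^*,\ell^*+e^*}$ (degree $0$) and applies Proposition~\ref{prop:deg difference} once in each direction; the difference is cosmetic.

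Part (b) is where there is a genuine gap, and also where you miss that the statement is already a free byproduct of (a). The paper's route: applying Proposition~\ref{prop:compare 2 rows}(a) to the constant entry $P_{\tilde{i},d_{\tilde{i}}+1}$ yields $P_{\tilde{i},d_{\tilde{i}}+1}(m)\alpha_{\tilde{i}}^m=R(n)\beta_{j_*}^n$ with $j_*=j^*$ (by Proposition~\ref{prop:deg difference} and non-degeneracy) and $\deg R=e^*-d_{\tilde{i}}=0$ by part (a); so both sides are constants times $\alpha_{\tilde{i}}^m$ and $\beta_{j^*}^n$, and (b) follows in one line. Your analytic substitute does not close. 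The key false step is the claim that partitioning $m$ and $n$ into residue classes modulo a high power of $p$ makes $v_p(P_{\tilde{i},1}(m))$ and $v_p(Q(n))$ constant: take $P(x)=x$ and the class $m\equiv 0\bmod p^k$, on which $v_p(m)$ is unbounded. In general, if $P_{\tilde{i},1}$ has a root in the relevant $p$-adic ring, $\vert P_{\tilde{i},1}(m)\vert_p$ is not bounded below by a positive constant on any residue class, only by something like $c\,m^{-D}$ (product formula), which gives $\vert n-bq\vert=O(\log n)$ rather than $O(1)$; so the piecewise constancy of $\beta_{j^*}^{n-bq}$ does not follow. Your closing step has the same defect: the convergence of $Q(n)/P_{\tilde{i},1}(m)$ to $(\operatorname{lead}Q)/((\operatorname{lead}P_{\tilde{i},1})\lambda^{d_{\tilde{i}}})$ is an archimedean limit, and it is unavailable precisely in the case you flag as forcing you to a non-archimedean place (all archimedean absolute values of $\beta_{j^*}$ at most $1$). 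The fix is not more valuation-theoretic care but the observation that the Jordan structure hands you an exactly constant coefficient on each side.
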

\begin{proof}
First, we prove $d_{\tilde{i}}\leq e^*$ as follows. 
For the entry
$P_{\tilde{i},d_{\tilde{i}}+1}(m)\alpha_{\tilde{i}}^m$
in $A^m\bfu$, we have that 
$\deg(P_{\tilde{i},d_{\tilde{i}}+1})=0$. Applying part (a)
of Proposition~\ref{prop:compare 2 rows} to the
pair $(\tilde{i},d_{\tilde{i}}+1)$, we obtain $j_*$, $\ell_*$,
and a polynomial $R(x)$ such that 
$\beta_{j_*}\neq 0$, $\deg(R(x))=\deg(Q_{j_*,\ell_*})\geq 0$,
and
\begin{equation}\label{eq:tilde i and j_*}
P_{\tilde{i},d_{\tilde{i}}+1}(m)\alpha_{\tilde{i}}^m=R(n)\beta_{j_*}^n
\end{equation}
for every $(m,n)\in S_{\geq N}$. Applying Proposition~\ref{prop:deg difference} to the pair of equations \eqref{eq:tilde i and j*}
and \eqref{eq:tilde i and j_*}, we have that  
$d_{\tilde{i}}=e^*-\deg(R)\leq e^*$. We also have
$\displaystyle\frac{\beta_{j^*}}{\beta_{j_*}}$
is a root of unity, hence $j^*=j_*$ since the tuple of non-zero eigenvalues of $B$ is non-degenerate.

The inequality $e^*\leq d_{\tilde{i}}$ can be proved by similar arguments, as follows. We consider the entry
$Q_{j^*,\ell^*+e^*}(n)\beta_{j^*}^n$ in $B^n\bfv$
for which $\deg(Q_{j^*,\ell^*+e^*})=0$.
Applying part (b) of Proposition~\ref{prop:compare 2 rows}
to the pair $(j^*,\ell^*+e^*)$, we obtain 
$i^*$, $k^*$, and a polynomial $U(x)$ such that 
$\alpha_{i^*}\neq 0$, $\deg(U(x))=\deg(P_{i^*,k^*})\geq 0$,
and 
\begin{equation}\label{eq:i* and j*}
U(m)\alpha_{i^*}^m=Q_{j^*,\ell^*+e^*}(n)\beta_{j^*}^n
\end{equation} 
for every $(m,n)\in S_{\geq N}$. Applying Proposition~\ref{prop:deg difference} to the pair of equations \eqref{eq:tilde i and j*}
and \eqref{eq:i* and j*}, we have that
$d_{\tilde{i}}-\deg(U)=e^*$. This finishes the proof of part (a).

For part (b), from $d_{\tilde{i}}=e^*-\deg(R)$
and part (a), we have that $\deg(R)=0$. Hence both 
polynomials $P_{\tilde{i},d_{\tilde{i}}+1}$
and $R$ are non-zero constants. Since $j_*=j^*$, equation
\eqref{eq:tilde i and j_*} finishes the proof.
\end{proof}

It is possible to use the arguments in Proposition~\ref{prop:dtildei = e*} to prove that 
$d_{\tilde{i}}=\max\{e_j:\ j\in\cJ\}$; however we will not use
this fact. We can now easily complete the proof of Theorem~\ref{thm:to Jordan form}. Part (b) of Proposition~\ref{prop:dtildei = e*} shows that the set
$S_{\geq N}$ \emph{is contained in} one single set of the form
$\{(m_0k+\ell_1,n_0k+\ell_2):\ k\in\N_0\}$
with $\ell_1,\ell_2\in \N_0$ and $m_0,n_0\in \N$. In
fact we can choose $(m_0,n_0)$ to be the minimal pair of positive integers
such that 
$\alpha_{\tilde{i}}^{m_0}=\beta_{j^*}^{n_0}$ and choose $(\ell_1,\ell_2)$
to be
the minimal pair of non-negative integers such that
$\alpha_{\tilde{i}}^{\ell_1}=\omega\beta_{j^*}^{\ell_2}$.
Both of these pairs exist by part (b) of Proposition~\ref{prop:dtildei = e*} and our assumption that $S_{\geq N}$ is infinite.
It remains to study the set of $k\in\N_0$ satisfying:
\begin{equation}\label{eq:set of k}
(A^{m_0})^kA^{\ell_1}\bfu=C(B^{n_0})^kB^{\ell_2}\bfv+\bfw,\ m_0k+\ell_1\geq N,\ n_0k+\ell_2\geq N.
\end{equation}

We now consider $K^{2N}$ with the coordinates $(\bfx,\bfy)$
(where $\bfx,\bfy\in K^{N}$), the linear map
$L:\ K^{2N}\rightarrow K^{2N}$
given by 
$L(\bfx,\bfy)=(A^{m_0}\bfx,B^{n_0}\bfy)$,
the starting point $(A^{\ell_1}\bfu,B^{\ell_2}\bfv)$,
and the subvariety defined by
$\bfx=C\bfy+\bfw$. Applying Corollary~\ref{lem:easy ML} 
to the current data, we have that the set of $k\in\N$ satisfying
\eqref{eq:set of k} is a finite union of arithmetic progressions. This finishes the proof of Theorem~\ref{thm:to Jordan form}.


\section{Proof of Theorem~\ref{main result} and further remarks}
\label{section proof}

\subsection{Some reduction}
By using similar arguments as in Subsection~\ref{subsec:reduction linear case}, we reduce to the case $r=2$. In other words, after
a slight change of notation, we reduce to proving the following:
\begin{theorem}\label{thm:r=2 semiabelian}
Let $X$ be a semiabelian variety over $K$. Let $\Phi$ and
$\Psi$ be $K$-morphisms from $X$ to itself. Let $\Phi_{0}$ 
and $\Psi_0$ be 
 $K$-endomorphisms of $X$ and $\alpha_0,\beta_0\in X(K)$
such that $\Phi(x)=\Phi_0(x)+\alpha_0$
and $\Psi(x)=\Psi_0(x)+\beta_0$. 
Let $\alpha,\beta\in X(K)$ such that 
$\alpha$ is not $\Phi$-preperiodic and $\beta$ is not $\Psi$-preperiodic.
If
none of the eigenvalues of $D\Phi_0$ and $D\Psi_0$ is a root of unity, then the set
$$S:=\{(m,n)\in \N_0\times\N_0\colon \Phi^{m}(\alpha)=\Psi^{n}(\beta)\}$$ 
is a finite union of sets of the form
$\{(m_0k+\ell_1,n_0k+\ell_2)\colon k\in\N_0\}$
for some $m_0,n_0,\ell_1,\ell_2\in\N_0$.
\end{theorem}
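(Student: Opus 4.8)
The plan is to reduce Theorem~\ref{thm:r=2 semiabelian} to Theorem~\ref{main result linear algebra} (in fact, to Theorem~\ref{thm:to Jordan form}) by passing to a $p$-adic analytic uniformization of the semiabelian variety $X$. First I would choose a finitely generated subfield of $K$ over which $X$, $\Phi$, $\Psi$, $\alpha$, and $\beta$ are defined, embed it into $\C_p$ for a suitable prime $p$, and use the structure theory of semiabelian varieties over $p$-adic fields: after replacing $\Phi$ and $\Psi$ by iterates $\Phi^{M}$ and $\Psi^{M}$ (and splitting $S$ according to the residues of $m$ and $n$ modulo $M$, exactly as in the reduction preceding Theorem~\ref{thm:to Jordan form}), one can arrange that the orbit points $\Phi^{m}(\alpha)$ and $\Psi^{n}(\beta)$ eventually lie in a ``small'' $p$-adic analytic subgroup $\cU\subseteq X(\C_p)$ on which the $p$-adic exponential (logarithm) map $\log_X:\cU\to \operatorname{Lie}(X)\otimes\C_p \cong \C_p^N$ is an analytic isomorphism, where $N=\dim X$. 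The key compatibility is that $\log_X\circ\Phi_0 = D\Phi_0\circ \log_X$ on $\cU$ (after shrinking $\cU$ so that it is stable under $\Phi_0$ and $\Psi_0$), so that translation by $\alpha_0$ becomes affine translation and $\Phi_0$ becomes the linear map $D\Phi_0$ on $\C_p^N$.

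Concretely, the second step is: pick the prime $p$ large enough that $X$ has good reduction at (the place of the number field corresponding to) $p$, that $\alpha_0$, $\beta_0$, and the images of $\alpha$, $\beta$ reduce appropriately, and that $p$ does not divide the relevant torsion; then the formal group of $X$ gives the subgroup $\cU$ and the logarithm $\log_X$. Choosing $M$ to be (a multiple of) the order of the reduction group, $\Phi^{M}(\alpha)$ and $\Psi^{M}(\beta)$ land in the coset of $\cU$ through a fixed point; translating, we may assume they lie in $\cU$ itself for all $m,n$ beyond a bounded range. Applying $\log_X$ and writing $\bfu:=\log_X(\alpha)$, $\bfv:=\log_X(\beta)$, the relation $\Phi^{m}(\alpha)=\Psi^{n}(\beta)$ becomes, after unwinding $\Phi^{m}(\alpha)=\Phi_0^{m}(\alpha)+(\text{a partial sum of }\Phi_0^{i}\alpha_0)$ and similarly for $\Psi$, an equation of the shape $A^{m}\bfu' = C B^{n}\bfv' + \bfw$ in $\C_p^N$, where $A=D\Phi_0$, $B=D\Psi_0$ are matrices none of whose eigenvalues is a root of unity, $C$ comes from the identification of tangent spaces, and the nonpreperiodicity of $\alpha,\beta$ translates into nonpreperiodicity of $\bfu',\bfv'$ under $\bfx\mapsto A\bfx$, $\bfx\mapsto B\bfx$. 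This is exactly the setting of Theorem~\ref{thm:to Jordan form} (one reaches Jordan form by a further linear change of coordinates), whose conclusion gives that $S$, up to the finitely many excluded pairs, is a single set of the desired form; adding back the finitely many exceptional pairs keeps the conclusion a finite union of such sets.

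One technical point I would be careful about: the passage from ``$\Phi^{m}(\alpha)$ eventually lies in a coset of $\cU$'' to ``we may assume it lies in $\cU$ and $\log_X$ linearizes everything'' requires that $\cU$ be simultaneously stable under $\Phi_0$ and $\Psi_0$ and that both induce on $\operatorname{Lie}(X)$ the linear maps $D\Phi_0$, $D\Psi_0$; this is where I would invoke the functoriality of the $p$-adic logarithm together with the observation (cited in the introduction from \cite[Theorem~5.1.37]{Noguchi}) that any self-morphism of $X$ is a translation composed with a group endomorphism, so $\Phi_0,\Psi_0\in\End(X)$ genuinely act on the formal/analytic group. I also need that after replacing by $\Phi^{M},\Psi^{M}$ the eigenvalue hypothesis is preserved, which is clear since the eigenvalues of $D\Phi_0^{M}=(D\Phi_0)^{M}$ are $M$-th powers of those of $D\Phi_0$, and an $M$-th power is a root of unity iff the original is.

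The main obstacle I expect is arranging a \emph{single} prime $p$ (and a single subgroup $\cU$, a single $M$) that works for both maps $\Phi$ and $\Psi$ simultaneously and for which the convergence of $\log_X$ and the identity $\log_X\circ\Phi_0=D\Phi_0\circ\log_X$ both hold on all of $\cU$; the nonarchimedean estimates controlling the radius of convergence of the formal logarithm, and checking that finitely many ``bad'' pairs $(m,n)$ (those whose orbit points have not yet entered $\cU$) can be discarded without harming the conclusion, are the parts needing genuine care. Once $\cU$ is fixed, the rest is a clean translation into linear algebra and an appeal to Theorem~\ref{main result linear algebra}.
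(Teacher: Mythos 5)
Your proposal is correct and follows the same overall strategy as the paper: fix a finitely generated ring of definition, choose a prime $p$ with a smooth semiabelian model over $\Z_p$ to which the maps and points extend, linearize via the $p$-adic exponential on a small ball where $\exp$ intertwines $\Phi_0$ with $D\Phi_0$, and invoke Theorem~\ref{main result linear algebra}. The one place where you diverge is the step that forces the relevant points into the domain of the exponential. The paper uses the fact that $\exp(\cU)\cap\cX(\Z_p)$ has finite index $k$ in $\cX(\Z_p)$ and multiplies $\alpha,\alpha_0,\beta,\beta_0$ by $k$; since multiplication by $k$ is not injective, recovering the original set $S$ from the set for the scaled system costs an extra appeal to the Dynamical Mordell--Lang theorem for \'etale maps (\cite[Theorem~4.1]{Jason}) along each arithmetic progression. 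You instead pass to iterates $\Phi^{M},\Psi^{M}$ and split $S$ by residue classes of $(m,n)$, using that the induced affine self-map of the finite quotient $\cX(\Z_p)/(\exp(\cU)\cap\cX(\Z_p))$ is eventually periodic, so that along each class the orbit eventually lies in a fixed coset $c+\exp(\cU)$; after subtracting $c$ the recursion $z\mapsto\Phi_0^{M}(z)+\delta$ has $\delta\in\exp(\cU)$ (a difference of two elements of that subgroup), so it still linearizes, the eigenvalue and non-preperiodicity hypotheses persist, and the finitely many pairs below the preperiod are singletons that can be discarded. This variant is sound and has the mild advantage of avoiding the \'etale DML input, at the cost of the coset bookkeeping you flag; one small imprecision is that $M$ should be the eventual period of the orbit of $\alpha$ (resp.\ $\beta$) under the induced \emph{affine} map on the finite quotient, not merely the order of that group, but this does not affect the argument.
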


\subsection{Proof of Theorem~\ref{thm:r=2 semiabelian}}
Assume the notation of Theorem~\ref{thm:r=2 semiabelian} throughout this subsection. We start with a further reduction.

\begin{lemma}
\label{suffices to multiply by m}
Let $k$ be a positive integer. It suffices to prove Theorem~\ref{thm:r=2 semiabelian} for the maps $\tilde{\Phi}$ and $\tilde{\Psi}$ and starting points $\tilde{\alpha}=k\alpha$ and $\tilde{\beta}=k\beta$, where $\tilde{\Phi}(x)=\Phi_0(x)+k\alpha_0$ and $\tilde{\Psi}(x)=\Psi_0(x)+k\beta_0$.
\end{lemma}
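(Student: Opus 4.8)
The plan is to reduce from the pair $(\Phi,\Psi,\alpha,\beta)$ to the pair $(\tilde\Phi,\tilde\Psi,k\alpha,k\beta)$ by tracking how the solution set $S$ transforms under multiplication by $k$ on $X$. First I would observe the key commutation identity: if $[k]:X\to X$ denotes multiplication by $k$, then $[k]\circ\Phi = \tilde\Phi\circ[k]$, because for $x\in X(K)$ we have $[k](\Phi_0(x)+\alpha_0) = \Phi_0(kx) + k\alpha_0 = \tilde\Phi(kx)$, using that $\Phi_0$ is a group endomorphism and hence commutes with $[k]$. Iterating, $[k]\circ\Phi^m = \tilde\Phi^m\circ[k]$ for all $m\in\N_0$, and similarly $[k]\circ\Psi^n = \tilde\Psi^n\circ[k]$. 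Consequently, if $(m,n)\in S$, i.e.\ $\Phi^m(\alpha)=\Psi^n(\beta)$, then applying $[k]$ gives $\tilde\Phi^m(k\alpha) = \tilde\Psi^n(k\beta)$, so $(m,n)$ lies in the analogous set $\tilde S$ for the new data. Thus $S\subseteq \tilde S$.

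Next I would check the hypotheses are preserved so that Theorem~\ref{thm:r=2 semiabelian} genuinely applies to the new data: the linearizations satisfy $D\tilde\Phi_0 = D\Phi_0$ and $D\tilde\Psi_0 = D\Psi_0$ (the translation part does not affect the induced linear map on the tangent space at $0$), so the eigenvalue condition carries over verbatim. I also need that $k\alpha$ is not $\tilde\Phi$-preperiodic and $k\beta$ is not $\tilde\Psi$-preperiodic. For this, use the commutation identity again: $\tilde\Phi^m(k\alpha) = [k](\Phi^m(\alpha))$, so the orbit $\cO_{\tilde\Phi}(k\alpha) = [k](\cO_\Phi(\alpha))$ is the image of an infinite set under $[k]$; since $[k]:X\to X$ is a finite (in fact surjective with finite kernel) morphism, it has finite fibers, so the image of an infinite set is infinite. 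Hence $k\alpha$ is not $\tilde\Phi$-preperiodic, and symmetrically for $k\beta$ under $\tilde\Psi$.

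Granting Theorem~\ref{thm:r=2 semiabelian} for the new data, $\tilde S$ is a finite union of sets of the form $\{(m_0j+\ell_1, n_0j+\ell_2)\colon j\in\N_0\}$. Since $S\subseteq \tilde S$, it remains to note that an arbitrary subset of such a set, cut out by the extra constraint $\Phi^m(\alpha)=\Psi^n(\beta)$, is again a finite union of sets of the required form. Fix one set $T=\{(m_0j+\ell_1,n_0j+\ell_2)\colon j\in\N_0\}$ occurring in $\tilde S$; I want to show $S\cap T$ has the required shape. If $m_0=n_0=0$ this is trivial ($T$ is a singleton). Otherwise, parametrize by $j$ and consider $g(j) := \Phi^{m_0j+\ell_1}(\alpha)$ and $h(j):=\Psi^{n_0j+\ell_2}(\beta)$; the condition $g(j)=h(j)$ is a Mordell--Lang type condition for the single endomorphism $j\mapsto(g(j),h(j))$ of $X\times X$ evaluated against the diagonal, but more elementarily it suffices to invoke the original cyclic case: applying $[k]$ once more, $g(j)=h(j)$ iff $[k]g(j)=[k]h(j)$, and $\{j\colon [k]g(j)=[k]h(j)\}$ is, by the structure already obtained for $\tilde S$ (which contains these pairs) combined with an intersection argument, a finite union of arithmetic progressions in $j$ — equivalently $S\cap T$ is a finite union of sets of the stated form. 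The main obstacle is this last bookkeeping step: arguing cleanly that the backward inclusion is not needed and that intersecting the already-structured $\tilde S$ with the defining relation of $S$ cannot destroy the rank-$\le 1$ coset structure; here one genuinely uses that along each progression in $\tilde S$ the relevant sequences are linear recurrences, so Theorem~\ref{thm:SML} (Skolem--Mahler--Lech) applies to the coordinate-wise difference and yields a finite union of arithmetic progressions in the parameter $j$.
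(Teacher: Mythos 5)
Your overall architecture matches the paper's: use the commutation $[k]\circ\Phi=\tilde\Phi\circ[k]$ to get the forward inclusion $S\subseteq\tilde S$, check the hypotheses transfer (the paper leaves the non-preperiodicity of $k\alpha$ implicit; your finite-fibers argument is a correct way to see it), and then refine each arithmetic-progression piece of $\tilde S$ by the original equation. The problem is that your justification of this last refinement step does not work, and it is precisely the nontrivial content of the lemma. First, the assertion ``$g(j)=h(j)$ iff $[k]g(j)=[k]h(j)$'' is false for $k\ge 2$: the reverse implication only gives $g(j)-h(j)\in X[k]$, since $[k]$ has nontrivial kernel. Moreover, even the true direction is useless here, because $\{j\colon [k]g(j)=[k]h(j)\}$ is exactly the set of $j$ with $(m_0j+\ell_1,n_0j+\ell_2)\in\tilde S$, which is all of $\N_0$ for the progression $T$ you fixed; intersecting with it tells you nothing about $S\cap T$.

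Second, your fallback --- ``along each progression the relevant sequences are linear recurrences, so Skolem--Mahler--Lech applies to the coordinate-wise difference'' --- is not available at this point. The sequence $\Phi^{m_0j+\ell_1}(\alpha)$ is a sequence of points of a semiabelian variety; there is no global coordinate system in which it is a linear recurrence sequence of numbers. The only linearization in the paper is the local $p$-adic exponential, and it applies only to points in $\exp(\cU)$, i.e.\ only \emph{after} multiplying by $k$ --- which is exactly what this lemma is needed to justify, so invoking it here would be circular. What the paper actually does for this step is cite the cyclic dynamical Mordell--Lang theorem for \'etale self-maps (\cite[Theorem~4.1]{Jason}): the set of $\ell$ with $\Phi^{m_0\ell+r_1}(\alpha)=\Psi^{n_0\ell+r_2}(\beta)$ is the return set of $(\Phi^{r_1}(\alpha),\Psi^{r_2}(\beta))$ to the diagonal under $\Phi^{m_0}\times\Psi^{n_0}$, and that external result is what delivers the finite union of arithmetic progressions in $\ell$. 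Without some such input, your proof has a genuine gap at its final step.
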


\begin{proof}
Assume Theorem~\ref{thm:r=2 semiabelian} holds for endomorphisms $\tilde{\Phi}$ and $\tilde{\Psi}$ and starting points $\tilde{\alpha}$ and $\tilde{\beta}$. Then $\cO_{\tilde{\Phi}}(\tilde{\alpha})=k\cdot \cO_{\Phi}(\alpha)$ and  $\cO_{\tilde{\Psi}}(\tilde{\beta})=k\cdot \cO_{\Psi}(\beta)$, where for any set $T$ of points of $X$, we define $$k\cdot T :=\{k\cdot x\colon x\in T\}.$$
So, we know that the set $S:=\{(m,n)\in \N_0\times \N_0\colon \tilde{\Phi}^m(\tilde{\alpha}) = \tilde{\Psi}^n(\tilde{\beta})\}$ is a finite union of sets of the form $\{(m_0\ell+r_1,n_0\ell+r_2)\colon \ell\in\N_0\}$ for given $m_0,n_0,r_1,r_2\in\N_0$. So, it suffices to prove that for each such $m_0,n_0,r_1,r_2\in\N_0$, the set of $\ell\in\N_0$ such that $\Phi^{m_0\ell+r_1}(\alpha)=\Psi^{n_0\ell+r_2}(\beta)$ is a finite union of sets of the form $\{\ell_0s+r_0\}_{s\in\N_0}$. Indeed, this last statement is a consequence of \cite[Theorem~4.1]{Jason}.
\end{proof}

Let $R$ be a finitely generated $\Z$-subalgebra of $K$ over which $X$ (along with its points $\alpha, \alpha_0,\beta,\beta_0$), and also $\Phi$ and $\Psi$ are defined.  By \cite[Proposition
4.4]{Jason} (see also \cite[Proposition 3.3]{GT-JNT} and \cite[Chapter~4]{book}), there exist a prime number $p$ and an embedding of $R$ into $\Z_p$ such that
 
\begin{enumerate}
\item[(i)] $X$ has a smooth semiabelian model $\cX$  over $\Z_p$;
\item[(ii)] $\Phi$ and $\Psi$ extend to endomorphism of $\cX$;
\item[(iii)] $\alpha, \alpha_0,\beta,\beta_0$ extend to points in $\cX(\Z_p)$.
\end{enumerate}

Let $f_0$ and $g_0$ denote the linear maps induced on the tangent space at $0$ by $\Phi_0$, respectively $\Psi_0$.  By (ii) above and \cite[Proposition 2.2]{Jason}, if one chooses coordinates
for this tangent space via generators for the completed local ring at
$0$, then the entries of the $N$-by-$N$  matrices $A$ and $B$ corresponding to $f_0$ and $g_0$ will be in $\Z_p$ (where $N=\dim(X)$).  Fix
one such set of coordinates and let $| \cdot |_p$ denote the corresponding $p$-adic
metric on the tangent space at $0$. We let $\C_p$ be the completion of an algebraic closure of $\Q_p$.

According to \cite[Proposition 3, p. 216]{NB} there exists a $p$-adic
analytic map $\exp$ which induces an analytic isomorphism between a
sufficiently small neighborhood $\cU$ of $\C_p^N$ and a
corresponding $p$-adic neighborhood of the origin $0\in
\cX(\C_p)$. Furthermore (at the expense of possibly replacing $\cU$ by a smaller set), we may assume that the neighborhood $\cU$ is
a sufficiently small open ball, i.e., there exists a (sufficiently small) positive real number $\epsilon$ such that $\cU$ consists of all 
$(z_1,\dots,z_N)\in\C_p^N$ satisfying $|z_i|_p<\epsilon$.  Because
$\exp(\cU)\cap \cX(\Z_p)$ is an open subgroup of the compact group
$\cX(\Z_p)$ (see \cite[p.~1402]{GT-JNT}), we conclude that $\exp(\cU)\cap
\cX(\Z_p)$ has finite index $k\in\N$ in $\cX(\Z_p)$. Using Lemma~\ref{suffices to multiply by m} (at the expense of replacing $\alpha$, $\alpha_0$, $\beta$, $\beta_0$ by $k\alpha$, $k\alpha_0$, $k\beta$ and respectively $k\beta_0$), we may assume
that $\alpha,\alpha_0,\beta,\beta_0\in \exp(\cU)$. Therefore, there exists $u,u_0,v,v_0\in
\cU$ such that
$\exp(u)=\alpha$, $\exp(u_0)=\alpha_0$, $\exp(v)=\beta$ and $\exp(v_0)=\beta$. Also, we let $f,g:\bA^N\lra \bA^N$ be the affine transformations given by $f(x)=f_0(x)+u_0$ and respectively $g(x)=g_0(x)+v_0$ for each $x\in \bA^N$. Since the entries of the matrices $A$ and $B$ (corresponding to the linear transformations $f_0$ and $g_0$) have entries which are $p$-adic integers,  we conclude that $\cO_f(u),\cO_g(v)\subset \cU$. So, because $\exp$ is a local isomorphism, while $\Phi^m(\alpha)=\exp\left(f^m(u)\right)$ and $\Psi^n(\beta)=\exp\left(g^n(v)\right)$ for each $m,n\in\N_0$, the desired conclusion follows from Theorem~\ref{main result linear algebra}.

\subsection{Further remarks}
Let $X$ be a semiabelian variety over an algebraically closed field $K$ of characteristic $0$. We conclude this paper by introducing some geometric conditions
that imply the condition that none of
the eigenvalues of  $D\Phi_0$ is a root of unity
as required in the statement of Theorem~\ref{main result} (or 
Theorem~\ref{thm:r=2 semiabelian}).

Recall (see, for example, \cite[Section~7]{Medvedev-Scanlon}) that a dominant $K$-endomorphism $\Phi_0$ of $X$ is said to
preserve a non-constant fibration if
there is a non-constant rational map
$f\in K(X)$ such that 
$f\circ \Phi_0=f$. We have the following:
\begin{proposition}\label{prop:no fibration}
Let $\Phi_0$ be a dominant $K$-endomorphism of $X$. Then $\Phi_0$ preserves a non-constant fibration if and only if at least one of the eigenvalues of $D\Phi_0$ is a root of unity.
\end{proposition}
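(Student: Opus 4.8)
The plan is to prove the two implications separately; the reverse implication is short, and the forward one is the substantive part.

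\emph{($\Leftarrow$) A root-of-unity eigenvalue forces a preserved fibration.} Suppose $\lambda$ is an eigenvalue of $D\Phi_0$ with $\lambda^d=1$. Then $1$ is an eigenvalue of $(D\Phi_0)^d$, so the group homomorphism $\Phi_0^d-\mathrm{id}\colon X\to X$, $x\mapsto\Phi_0^d(x)-x$, has singular differential at $0$; hence $Y:=\mathrm{im}(\Phi_0^d-\mathrm{id})$ is a \emph{proper} sub-semiabelian variety of $X$. Since $\Phi_0$ commutes with $\Phi_0^d-\mathrm{id}$ we have $\Phi_0(Y)\subseteq Y$, so $\Phi_0$ descends to an endomorphism $\overline{\Phi_0}$ of the positive-dimensional semiabelian variety $X/Y$; moreover $\Phi_0^d(x)-x\in Y$ for all $x$ gives $\overline{\Phi_0}^{\,d}=\mathrm{id}$ on $X/Y$, so $\overline{\Phi_0}$ is a finite-order automorphism of $X/Y$. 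By Artin's lemma on fixed fields of finite group actions, $K(X/Y)^{\overline{\Phi_0}}$ has finite index in $K(X/Y)$, hence the same transcendence degree $\dim(X/Y)\geq 1$ over $K$; choose a non-constant $h$ in it. If $q\colon X\to X/Y$ is the quotient map, then $f:=h\circ q\in K(X)$ is non-constant and $f\circ\Phi_0=h\circ q\circ\Phi_0=h\circ\overline{\Phi_0}\circ q=h\circ q=f$, so $\Phi_0$ preserves the non-constant fibration $f$.

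\emph{($\Rightarrow$) A preserved fibration forces a root-of-unity eigenvalue.} I would argue the contrapositive: assume no eigenvalue of $D\Phi_0$ is a root of unity, and suppose for contradiction that some non-constant $f\in K(X)$ satisfies $f\circ\Phi_0=f$. Since $\Phi_0$ is dominant it is surjective, hence an isogeny. The strategy is to exhibit a point $x\in X$ whose forward orbit $\{\Phi_0^n(x):n\in\N_0\}$ is Zariski dense in $X$ and contained in the domain of definition of $f$: then $f$ is constant (equal to $f(x)$) on a Zariski-dense set, hence constant, a contradiction. To find $x$: because $\mathrm{End}(X)$ is a finitely generated $\Z$-module, $\mathrm{End}(X)\cdot x$ is a finitely generated subgroup of $X$ containing the whole orbit of $x$. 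If $W:=\overline{\{\Phi_0^n(x)\}}$ were proper in $X$, then by the Mordell--Lang theorem for semiabelian varieties (proved by Faltings and Vojta) $W=\overline{W\cap(\mathrm{End}(X)\cdot x)}$ would be a finite union of translates of sub-semiabelian varieties, all of them proper; the finite surjective self-map $\Phi_0$ of $W$ permutes its irreducible components, so some power $\Phi_0^s$ fixes a component $a+Y$ with $Y\subsetneq X$ and $\Phi_0^s(Y)=Y$, and maps that component into itself. Writing $q\colon X\to X/Y$, for the appropriate residue $j$ of the orbit indices the point $q(\Phi_0^{\,j}(x))$ is then fixed by the induced isogeny $\overline{\Phi_0^s}$ of $X/Y$, hence lies in $\ker(\overline{\Phi_0^s}-\mathrm{id})$, which is finite because no eigenvalue of $D\Phi_0$ on $\mathrm{Lie}(X/Y)\subseteq\mathrm{Lie}(X)$ is an $s$-th root of unity; so $q(\Phi_0^{\,j}(x))$ is a torsion point of the positive-dimensional $X/Y$. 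As there are only countably many proper sub-semiabelian varieties $Y$, countably many torsion points in each $X/Y$, and countably many iterates (and finitely many irreducible components of any pole divisor of $f$), the set of ``bad'' $x$ is a countable union of proper closed subvarieties of $X$; after harmlessly enlarging $K$ to an uncountable algebraically closed field — which changes neither the hypotheses nor whether $f$ is constant — a point $x$ avoiding all of them exists, giving the contradiction.

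\emph{The main obstacle.} The forward implication is the hard part, and the reason a genuinely global (arithmetic) input such as Mordell--Lang seems unavoidable is already visible on a torus. The Fibonacci endomorphism $(t_1,t_2)\mapsto(t_1t_2,t_1)$ of $\Gm^2$ has $D\Phi_0=\left(\begin{smallmatrix}1&1\\1&0\end{smallmatrix}\right)$, with eigenvalues $\tfrac{1\pm\sqrt5}{2}$, neither a root of unity; yet the binary form $q=z_1^2-z_1z_2-z_2^2$ satisfies $q\circ D\Phi_0=-q$, so $q^2$ is a $D\Phi_0$-invariant \emph{polynomial}. Consequently, pulling $f$ back by the exponential map at $0$ (archimedean or $p$-adic) and inspecting the homogeneous leading forms cannot by itself rule out invariant \emph{rational} functions: one really has to use that the orbit of a point lies in a finitely generated subgroup and invoke the arithmetic of such subgroups. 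The short argument, by contrast, only needs that $\Phi_0^d-\mathrm{id}$ fails to be an isogeny.
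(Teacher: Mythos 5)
Your proof is correct, but both halves take routes that differ from the paper's. For the easy direction the paper factors the minimal polynomial of $\Phi_0^{\ell}$ as $(z-1)g(z)$ and uses the projection $\pi=g(\Phi_0^{\ell})\colon X\to Y:=g(\Phi_0^{\ell})(X)$, which satisfies $\pi\circ\Phi_0^{\ell}=\pi$ on the nose; you instead quotient by $\mathrm{im}(\Phi_0^d-\id)$ and appeal to Artin's lemma to extract an invariant function from the finite-order automorphism of $X/Y$. Your version has the small advantage of producing $f$ with $f\circ\Phi_0=f$ directly, whereas the paper must first invoke a lemma that preserving a fibration is insensitive to passing to iterates. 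For the hard direction both arguments hinge on the same two pillars --- the orbit lies in a finitely generated subgroup, and Mordell--Lang for semiabelian varieties --- but you produce a point with Zariski-dense orbit by a counting argument (the bad locus is a countable union of proper closed subsets, so a good point exists after base change to an uncountable field), while the paper constructs one explicitly: it passes to an iterate making the eigenvalue ratios non-degenerate, takes $x=\exp(\mathbf v)$ for a suitable $p$-adic vector $\mathbf v$, and uses non-degenerate linear recurrences to show only finitely many $A^n\mathbf v$ lie in any coset of a proper subspace. Your route avoids the $p$-adic machinery (which the paper has already built for Theorem~\ref{main result} anyway) at the cost of the base-change trick and the standard but unproved fact that $X$ has only countably many semiabelian subvarieties.

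Three points deserve tightening but are not gaps. First, that countability assertion should be justified or referenced. Second, $\Phi_0$ need not literally permute the irreducible components of $W$ (a component could map into a larger one); what you need, and what is true, is that the induced self-map of the finite set of components is eventually periodic, so some $\Phi_0^{j}(x)$ lands in a component $a+Y$ with $\Phi_0^{s}(a+Y)\subseteq a+Y$. Third, $\mathrm{Lie}(X/Y)$ is a quotient of $\mathrm{Lie}(X)$, not a subspace; the conclusion you want (its eigenvalues form a sub-multiset of those of $D\Phi_0$, since $\mathrm{Lie}(Y)$ is invariant) still holds.
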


\begin{proof}
First we note that by \cite[Lemma~4.1]{Ghioca-Scanlon-2}, for any positive integer $\ell$, we know that $\Phi_0$ preserves a non-constant fibration if and only if $\Phi^\ell$ preserves a non-constant fibration.

Assume now that $D\Phi_0$ has an eigenvalue which is a root of unity, say of order $\ell\in\N$. Therefore it suffices to prove that $\Phi_1:=\Phi_0^\ell$ preserves a non-constant fibration.

Let $f\in \Z[z]$ be the minimal (monic) polynomial for $D\Phi_1$; alternatively, $f(z)$ is the minimal monic polynomial with integer coefficients such that $f(\Phi_1)=0\in \End(X)$. We know that $f(1)=0$; hence there exists $g\in \Z[z]$ such that $f(z)=(z-1)\cdot g(z)$. In particular, we know that $g(\Phi_1)$ is not the trivial endomorphism of $X$.  
We let $Y:=g(\Phi_1)(X)$; then $Y$ is a nontrivial semiabelian subvariety of $X$. We also let $\pi:X\lra Y$ be the map $x\mapsto g(\Phi_1)(x)$ and note that $\pi\circ \Phi_1 = \pi$ on $X$. Thus $\Phi_1$ preserves a non-constant fibration, contradiction.

Assume now that $D\Phi_0$ has no eigenvalue which is a root of unity; we will show that $\Phi_0$ does not preserve a nonconstant fibration. Again, at the expense of replacing $\Phi_0$ by an iterate $\Phi_1$, we may assume that all eigenvalues $\lambda_i$ of $D\Phi_1$ have the property that if $\lambda_i/\lambda_j$ is a root of unity, then $\lambda_i=\lambda_j$. Also, note that since $\Phi_0$ and therefore its iterate $\Phi_1$ is a dominant morphism, then each eigenvalue $\lambda_i$ of $D\Phi_1$ is nonzero and not equal to a root of unity according to our hypothesis.

Arguing identically as in the proof of Theorem~\ref{main result}, we can find a prime number $p$ and a suitable model $\cX$ of $X$ over $\Z_p$ such that each entry of $A:=D\Phi_1$ is a $p$-adic integer, and moreover the $p$-adic exponential map $\exp$ induces a local isomorphism between a sufficiently small ball $\cB$ in $\Cp^N$ and a corresponding small $p$-adic neighborhood $\cU$ of the origin of $\cX$. 

We can write $A=B^{-1}JB$, where $B$ is an invertible matrix and $J$ is in Jordan form. Because $B$ is invertible, we can choose $\bfv\in \cB$ such that $B\bfv$ has all its entries nonzero. Then arguing identically as in Section~\ref{sec:linear algebra}, we get that the entries of $J^nB\bfv$ are of the form
$$\left(P_{1,1}(n)\lambda_1^n,\cdots,P_{1,m_1}(n)\lambda_1^n,P_{2,1}(n)\lambda_2^n,\cdots , P_{2,m_2}(n)\lambda_2^n,\cdots \cdots , P_{r,m_r}(n)\lambda_r^n\right)^T,$$
where each $P_{i,1}$ is a nonzero polynomial, and moreover $\deg(P_{i,j})=\deg(P_{i,1})-j+1$ for each $i=1,\dots, r$ and for each $j=1,\dots, m_i$. Then for each $\bfw\in \Cp^N$ and for each $a\in \Cp$, we have that $\left\{\bfw^T\cdot J^nB\bfv + a\right\}_{n\ge 1}$ is a linear recurrence sequence with non-degenerate characteristic roots, unless $\bfw$ is the zero-vector. Therefore, given any proper linear subspace $W\subset \Cp^N$, there are finitely many vectors $A^n\bfv=B^{-1}J^nB\bfv$ contained in the same coset $\bfa + W$ (for any given $\bfa\in\Cp^N$). 

Let now $x=\exp(\bfv)\in \cU$. We claim that $\cO_{\Phi_1}(x)$ is Zariski dense in $X$. Indeed, otherwise there exists a coset $\beta+H$ of a proper algebraic subgroup $H$ of $X$ containing infinitely many points from the orbit $\cO_{\Phi_1}(x)$. This last statement  follows by noting that $\cO_{\Phi_1}(x)$ is contained in a finitely generated subgroup of $X$ (because there exists a monic nonzero polynomial $f\in \Z[z]$ such that $f(\Phi_1)=0\in \End(X)$) and then using the classical Mordell-Lang theorem (see \cite{Voj96}). Because all entries of $D\Phi_1$ are $p$-adic integers, then we know that $\cO_{\Phi_1}(x)\subset \cU$. Since $H$ is a proper algebraic subgroup of $X$, then $\exp^{-1}(H\cap\cU) = H_0\cap \cB$ for some proper linear subgroup $H_0\subset \Cp^N$. But then there are infinitely many vectors $A^n\bfv$ contained in a coset of $H_0$, which is a contradiction.

Now, since $\cO_{\Phi_1}(x)$ is Zariski dense in $X$, we immediately get that $\Phi_1$ and thus $\Phi_0$ cannot preserve a non-constant fibration (see also \cite[Section~7]{Medvedev-Scanlon}). This concludes the proof of Proposition~\ref{prop:no fibration}.    
\end{proof}

Another condition has been mentioned in Remark~\ref{rem:no automorphism}.
\begin{proposition}
\label{prop:no automorphism}
Let $\Phi$ be a self-map of $X$ over $K$ with
$\Phi(x)=\Phi_0(x)+\alpha_0$
where $\Phi_0$ is a $K$-endomorphism and $\alpha_0\in X(K)$.
Assume there does not exist $m\in\N$ and a positive dimensional closed subvariety
$Y$ of $X$ such that $\Phi^m$ restricts to an automorphism on $Y$.
Then none of the eigenvalues of $D\Phi_0$ is a root of unity. 
\end{proposition}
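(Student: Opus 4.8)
The plan is to prove the contrapositive: assuming $D\Phi_0$ has an eigenvalue that is a root of unity, we will construct a positive dimensional closed subvariety $Y$ of $X$ and a positive integer $m$ such that $\Phi^m$ restricts to an automorphism of $Y$. First I would reduce to the case where $\Phi$ is a group endomorphism. Indeed, passing to a suitable iterate $\Phi^{m_0}$ does not change the hypothesis on the eigenvalues (an iterate of $D\Phi_0$ still has a root-of-unity eigenvalue), and by choosing $m_0$ so that the translation part lands in a fixed point of $\Phi^{m_0}$ — possible provided $1$ is not an eigenvalue of $(D\Phi_0)^{m_0}$, which we may arrange if needed, or else $\Phi^{m_0}$ already fixes a translate of a positive dimensional subgroup and we argue there directly — we may conjugate $\Phi^{m_0}$ by a translation to make it a group endomorphism without changing whether it restricts to an automorphism on some positive dimensional subvariety.

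Next, working with a group endomorphism $\Phi_1 := \Phi_0^{\ell}$ where $\ell$ is chosen so that $D\Phi_1$ has an eigenvalue equal to $1$ (take $\ell$ to be the order of the relevant root of unity), I would mimic the construction in the first half of the proof of Proposition~\ref{prop:no fibration}. Let $f\in\Z[z]$ be the minimal monic polynomial of $\Phi_1$ in $\End(X)$; since $1$ is an eigenvalue of $D\Phi_1$ and the characteristic polynomial of $D\Phi_1$ divides a power of $f$, we get $f(1)=0$, so $f(z)=(z-1)g(z)$ with $g(\Phi_1)$ a nonzero endomorphism. Set $Y := \ker\bigl(g(\Phi_1)\bigr)^{\circ}$, the connected component of the identity of the kernel (a semiabelian subvariety), which is positive dimensional because $g(\Phi_1)$ is not injective: its differential $g(D\Phi_1)$ annihilates the eigenvector for the eigenvalue $1$. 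On $Y$ we have $(\Phi_1 - \id)\bigl(g(\Phi_1)\bigr) = -f(\Phi_1) = 0$, hence $(\Phi_1 - \id)$ maps $Y$ into $\ker g(\Phi_1)$; after replacing $\ell$ by a multiple we may assume $(\Phi_1-\id)(Y)\subseteq Y$, so $\Phi_1$ restricts to an endomorphism of $Y$ that is unipotent on the tangent space, hence an isogeny of $Y$; replacing $\Phi_1$ by a further iterate makes this restriction an automorphism of $Y$, contradicting the hypothesis.

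The main obstacle I anticipate is the bookkeeping in the reduction to the group-endomorphism case and, relatedly, making sure the subvariety $Y$ produced is genuinely $\Phi^m$-invariant for a \emph{uniform} $m$ rather than just mapped around among finitely many translates; one must track how the translation part $\alpha_0$ interacts with $Y$ and choose $m$ to absorb it. A clean way around this is to observe that $\cO_\Phi$-invariance only needs to hold up to translation: if $\Phi^m$ maps $Y$ isomorphically onto a translate $Y + \gamma$, then since translates of $Y$ form a torsor under $X/Y$ and the induced map on $X/Y$ has all eigenvalues equal to $1$ (being unipotent there is the point), some further iterate sends $Y$ back to itself, at which stage we genuinely have an automorphism. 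I would also remark that when $X$ is an abelian variety this is classical, and the semiabelian case adds only the verification that kernels and images of the relevant endomorphisms remain semiabelian, which is standard.
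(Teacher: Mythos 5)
Your overall strategy (argue the contrapositive, pass to an iterate so that $1$ is an eigenvalue, use the minimal monic polynomial $f\in\Z[z]$ of $\Phi_0$ in $\End(X)$, and exhibit a positive dimensional piece on which the map is unipotent hence invertible) is the same as the paper's, but the specific subvariety you construct is the wrong one. Writing $f(z)=(z-1)g(z)$ and setting $Y=\ker(g(\Phi_1))^{\circ}$ fails on two counts. First, $g(D\Phi_1)$ sends an eigenvector $v$ for the eigenvalue $1$ to $g(1)v$, which is nonzero whenever $(z-1)^2\nmid f$; so your justification that $Y$ is positive dimensional is incorrect (what is true is that $g$ must vanish at \emph{some} eigenvalue, but not necessarily at $1$). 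Second, and more seriously, on $\ker(g(\Phi_1))^{\circ}$ the differential of $\Phi_1$ has eigenvalues among the roots of $g$, which include all roots of $f$ other than $1$: for instance if $f(z)=(z-1)(z-2)$ then $g(z)=z-2$ and your $Y$ is exactly the piece on which $\Phi_1$ acts as doubling --- an isogeny of degree greater than $1$, not an automorphism, and no iterate of it is an automorphism either, since degrees multiply under composition. So the closing step ``replacing $\Phi_1$ by a further iterate makes this restriction an automorphism'' is false. The correct construction, as in the paper, factors $f(z)=(z-1)^r f_1(z)$ with $f_1(1)\neq 0$ and takes $Y=f_1(\Phi_0)(X)$ (equivalently, up to finite index, the identity component of $\ker((\Phi_0-\id)^r)$): there $(\sigma-\id)^r=0$ for $\sigma=\Phi_0|_Y$, and a unipotent endomorphism is \emph{already} an automorphism because $\id-N+N^2-\cdots$ inverts $\id+N$; no further iterate is needed at that stage.

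Your treatment of the translation $\alpha_0$ also does not work as stated: you cannot ``arrange that $1$ is not an eigenvalue of $(D\Phi_0)^{m_0}$,'' since the contradiction hypothesis forces $1$ to be an eigenvalue of every sufficiently divisible iterate, and your fallback torsor argument puts the eigenvalues on the wrong factor. With the correct $Y$, the complementary semiabelian subvariety $Z=(\Phi_0-\id)^r(X)$ is precisely the piece on which $\Phi_0-\id$ is an isogeny, hence surjective, and that is what allows one to solve $\Phi_0(\gamma_1)+\gamma_0=\gamma_1$ for $\gamma_1\in Z$ after decomposing $\alpha_0=\beta_0+\gamma_0$ along $X=Y+Z$ (the paper quotes \cite[Lemma~6.1]{Ghioca-Scanlon-2} for this almost-direct-sum decomposition). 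One then checks directly that $\Phi$ restricts to the automorphism $y\mapsto\sigma(y)+\beta_0$ of the translate $\gamma_1+Y$. In short, the skeleton of your argument is right, but the choice of $Y$, the ``iterate an isogeny into an automorphism'' step, and the fixed-point argument absorbing the translation all need to be repaired along these lines.
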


\begin{proof}
We argue by contradiction and therefore, we assume $D\Phi_0$ has an eigenvalue which is a root of unity. At the expense of replacing $\Phi$ by an iterate $\Phi^m$ we may assume that all eigenvalues of  $D\Phi_0$  are either equal to $1$, or they are not roots of unity.  We let (similar to the proof of Proposition~\ref{prop:no fibration}) $f\in \Z[z]$ be the minimal monic polynomial such that $f(\Phi_0)=0$. Then $f(t)=(t-1)^r\cdot f_1(t)$ for some monic polynomial $f_1\in \Z[t]$ such that $f_1(1)\ne 0$, and some $r\in\N$. We let $Y:=f_1(\Phi_0)(X)$ and $Z:=\left(\Phi_0-\id|_X\right)^r(X)$, where for any subvariety $V\subseteq X$, we denote by $\id|_V$ the identity map on $V$. Then $Y$ and $Z$ are semiabelian subvarieties of $X$. As proven in \cite[Lemma~6.1]{Ghioca-Scanlon-2}, we have that $X=Y+Z$ and $Y\cap Z$ is finite. Strictly speaking, \cite[Lemma~6.1]{Ghioca-Scanlon-2} is written for endomorphisms of abelian varieties, but the proof goes verbatim to semiabelian varieties since no property applicable only to abelian varieties (such as the Poincar\'e's Reducibility Theorem---see \cite[Fact~3.2]{Ghioca-Scanlon-2}) is used; essentially, all one uses is that the polynomials $(z-1)^r$ and $f_1(z)$ are coprime. So, $\Phi_0$ restricts to an endomorphism $\tau$ of $Z$ with the property that 
$$\left(\tau -\id|_Z\right):Z\lra Z$$ 
is an isogeny. Note that we allow the possibility that $Z$ is the trivial algebraic subgroup of $X$; in this case, it is still true that $\tau-\id|_Z$ is surjective. We let $\beta_0\in Y$ and $\gamma_0\in Z$ such that $\alpha_0=\beta_0+\gamma_0$. Hence there exists $\gamma_1\in Z$ such that 
\begin{equation}
\label{conjugated gamma}
\Phi_0(\gamma_1)+\gamma_0=\gamma_1. 
\end{equation}
In the case $Z$ is the trivial subgroup, then clearly $\gamma_0=\gamma_1=0$.

We claim that $\Phi$ restricts to an automorphism on the positive dimensional subvariety $V:=\gamma_1+Y$ (note that $r\ge 1$ and thus $\dim(Y)\ge 1$).  First we note that $\Phi_0$ restricts to an automorphism $\sigma$ on $Y$; indeed, $\Phi_0$ induces an endomorphism $\sigma$ of $Y$ by definition, and then since $(\sigma - \id|_Y)^r=0$, we get that $\sigma:Y\lra Y$ is an automorphism. Then for each $y\in Y$ we have that
$$\Phi(y+\gamma_1)=\Phi_0(y)+ \Phi_0(\gamma_1)+\beta_0+\gamma_0= \sigma(y)+\beta_0+\gamma_1.$$
Because $\sigma$ is an automorphism of $Y$, while $\beta_0\in Y$, we conclude that $$y\mapsto \sigma(y)+ \beta_0$$ is an automorphism of $Y$. This concludes the proof of Proposition~\ref{prop:no automorphism}.
\end{proof}


\end{document}